\pgfplotsset{compat=1.10}
\begin{document}

\newtheorem{thm}{Theorem}[section]
\newtheorem{cor}[thm]{Corollary}
\newtheorem{prop}[thm]{Proposition}
\newtheorem{lem}[thm]{Lemma}
%
\theoremstyle{definition}
\newtheorem{rem}[thm]{Remark}
\newtheorem{defn}[thm]{Definition}
\newtheorem{note}[thm]{Note}
\newtheorem{eg}[thm]{Example}
\newcommand{\Prf}{\noindent\textbf{Proof.\ }}
\newcommand{\bx}{\hfill$\blacksquare$\medbreak}
\newcommand{\upbx}{\vspace{-2.5\baselineskip}\newline\hbox{}%
\hfill$\blacksquare$\newline\medbreak}
\newcommand{\eqbx}[1]{\medbreak\hfill\(\displaystyle #1\)\bx}

\newcommand{\FFock}{\mathcal{F}}
\newcommand{\kil}{\mathsf{k}}
\newcommand{\Hil}{\mathsf{H}}
\newcommand{\hil}{\mathsf{h}}
\newcommand{\Kil}{\mathsf{K}}
\newcommand{\Real}{\mathbb{R}}
\newcommand{\Rplus}{\Real_+}

\newcommand{\bC}{{\mathbb{C}}}
\newcommand{\bD}{{\mathbb{D}}}
\newcommand{\bK}{{\mathbb{K}}}
\newcommand{\bN}{{\mathbb{N}}}
\newcommand{\bQ}{{\mathbb{Q}}}
\newcommand{\bR}{{\mathbb{R}}}
\newcommand{\bT}{{\mathbb{T}}}
\newcommand{\bX}{{\mathbb{X}}}
\newcommand{\bZ}{{\mathbb{Z}}}
\newcommand{\bH}{{\mathbb{H}}}
\newcommand{\BH}{{\B(\H)}}
\newcommand{\bsl}{\setminus}
\newcommand{\ca}{\mathrm{C}^*}
\newcommand{\cstar}{\mathrm{C}^*}
\newcommand{\cenv}{\mathrm{C}^*_{\text{env}}}
\newcommand{\rip}{\rangle}
\newcommand{\ol}{\overline}
\newcommand{\td}{Widetilde}
\newcommand{\Wh}{Widehat}
\newcommand{\sot}{\textsc{sot}}
\newcommand{\Wot}{\textsc{wot}}
\newcommand{\Wotclos}[1]{\ol{#1}^{\textsc{wot}}}
 \newcommand{\A}{{\mathcal{A}}}
 \newcommand{\B}{{\mathcal{B}}}
 \newcommand{\C}{{\mathcal{C}}}
 \newcommand{\D}{{\mathcal{D}}}
 \newcommand{\E}{{\mathcal{E}}}
 \newcommand{\F}{{\mathcal{F}}}
 \newcommand{\G}{{\mathcal{G}}}
\renewcommand{\H}{{\mathcal{H}}}
 \newcommand{\I}{{\mathcal{I}}}
 \newcommand{\J}{{\mathcal{J}}}
 \newcommand{\K}{{\mathcal{K}}}
\renewcommand{\L}{{\mathcal{L}}}
 \newcommand{\M}{{\mathcal{M}}}
 \newcommand{\N}{{\mathcal{N}}}
\renewcommand{\O}{{\mathcal{O}}}
\renewcommand{\P}{{\mathcal{P}}}
 \newcommand{\Q}{{\mathcal{Q}}}
 \newcommand{\R}{{\mathcal{R}}}
\renewcommand{\S}{{\mathcal{S}}}
 \newcommand{\T}{{\mathcal{T}}}
 \newcommand{\U}{{\mathcal{U}}}
 \newcommand{\V}{{\mathcal{V}}}
 \newcommand{\W}{{\mathcal{W}}}
 \newcommand{\X}{{\mathcal{X}}}
 \newcommand{\Y}{{\mathcal{Y}}}
 \newcommand{\Z}{{\mathcal{Z}}}

\newcommand{\fA}{{\mathfrak{A}}}
\newcommand{\fB}{{\mathfrak{B}}}
\newcommand{\fC}{{\mathfrak{C}}}
\newcommand{\fD}{{\mathfrak{D}}}
\newcommand{\fE}{{\mathfrak{E}}}
\newcommand{\fF}{{\mathfrak{F}}}
\newcommand{\fG}{{\mathfrak{G}}}
\newcommand{\fH}{{\mathfrak{H}}}
\newcommand{\fI}{{\mathfrak{I}}}
\newcommand{\fJ}{{\mathfrak{J}}}
\newcommand{\fK}{{\mathfrak{K}}}
\newcommand{\fL}{{\mathfrak{L}}}
\newcommand{\fM}{{\mathfrak{M}}}
\newcommand{\fN}{{\mathfrak{N}}}
\newcommand{\fO}{{\mathfrak{O}}}
\newcommand{\fP}{{\mathfrak{P}}}
\newcommand{\fQ}{{\mathfrak{Q}}}
\newcommand{\fR}{{\mathfrak{R}}}
\newcommand{\fS}{{\mathfrak{S}}}
\newcommand{\fT}{{\mathfrak{T}}}
\newcommand{\fU}{{\mathfrak{U}}}
\newcommand{\fV}{{\mathfrak{V}}}
\newcommand{\fW}{{\mathfrak{W}}}
\newcommand{\fX}{{\mathfrak{X}}}
\newcommand{\fY}{{\mathfrak{Y}}}
\newcommand{\fZ}{{\mathfrak{Z}}}

\newcommand{\sgn}{\operatorname{sgn}}
\newcommand{\rank}{\operatorname{rank}}
\newcommand{\supp}{\operatorname{supp}}
\newcommand{\dist}{\operatorname{dist}}
\newcommand{\Aut}{\operatorname{Aut}}
\newcommand{\Aff}{\operatorname{Aff}}
\newcommand{\Cknet}{{\mathcal{C}_{{\rm Knet}}}}
\newcommand{\Ckag}{{\mathcal{C}_{{\rm kag}}}}
\newcommand{\GL}{\operatorname{GL}}
\newcommand{\spn}{\operatorname{span}}

\newcommand{\ul}{\underline}

\title[Non-Euclidean braced grids]{Non-Euclidean braced grids}


\author[Stephen Power]{Stephen Power}

\address{Dept.\ Math.\ Stats.\\ Lancaster University\\
Lancaster LA1 4YF \\U.K. }
\email{s.power@lancaster.ac.uk}


\thanks{
{\it 2000 Mathematics Subject Classification.}
{52C25} \\
Key words and phrases: Braced grids, non-Euclidean norms, rigidity, flexibility\hfill 
}

\date{}

\begin{abstract} 
Necessary and sufficient conditions are obtained for the 
infinitesimal rigidity of braced grids in the plane with respect to non-Euclidean norms. Component rectangles of the grid may carry 0, 1 or 2 diagonal braces, and the combinatorial part of the conditions is given in terms of a matroid for the bicoloured bipartite multigraph defined by the braces. 
\end{abstract}

\maketitle

\section{Introduction}
In this note we consider how to rigidly brace an $m \times n$ grid of flexible squares when distances are measured with respect to a general  norm. The characterisation for the Euclidean norm, due to Bolker and Crapo \cite{bol-cra} in 1977, is well-known: bracing some of the squares, by adding a diagonal bar, gives an infinitesimally rigid bar-joint framework if and only if the subgraph of the complete bipartite graph $K_{m,n}$ determined by the braced squares is connected and spanning.
In the anisotropic non-Euclidean setting novel phenomena appear. A singly braced square is infinitesimally flexible and when it is doubly braced its infinitesimal rigidity may depend on its inclination relative to the principal axes. 
The natural braces graph is therefore a subgraph of the bicoloured bipartite multigraph $K^2_{m,n}$, in which each edge of $K_{m,n}$ is doubled and carries a distinct colour, blue or red. The colour of the edge corresponds to the translation class of the represented brace.

Let $\B_{\rm max}$ be the set of $2mn$ possible diagonal braces
that could be added to an $m \times n$ grid of squares  framework $\G$, and let $\G(\B)$ be the braced grid framework determined by a braces set 
$\B\subseteq \B_{\rm max}$. The fully doubly braced grid  $\G(\B_{\rm max})$ may be infinitesimally flexible in $(\bR^2,\|\cdot\|)$ for certain values of angular inclination relative to the $x$-axis. 
When the norm is differentiable and strictly convex we show how these exceptional values are determined by the geometry of the unit sphere $\{(x,y):\|(x,y)\|=1\}$ and its set of tangents. See Lemma \ref{l:braceparams}. 
Additionally, we obtain the following complementary result, an analogue of the Bolker-Crapo theorem. 

A cycle of edges $e_1, \dots ,e_{2k}$ in $K^2_{m,n}$  is said to have a \emph{dependent colouring}, or to be \emph{dependent}, if the number of blue edges in
$\{e_1, e_3, \dots ,e_{2k-1}\}$ is equal to the number of blue edges in
$\{e_2, e_4, \dots ,e_{2k}\}$, otherwise the cycle is said to be \emph{independent}, with a \emph{independent colouring}.

\begin{thm}\label{t:nonEthm} 
Let $\G(\B)$ be an $m\times n$ braced grid bar-joint framework in $\bR^2$ and let $\|\cdot\|$ be a differentiable, strictly convex non-Euclidean norm.
Then the following are equivalent.

(i) $\G(\B)$ is infinitesimally rigid in $(\bR^2,\|\cdot\|)$.

(ii) $\G(\B_{\rm max})$ is infinitesimally rigid in $(\bR^2,\|\cdot\|)$ and the bicoloured braces graph of  $\G(\B)$ is a spanning subgraph of $ K^2_{m,n}$ with an independent cycle in each path-connected component.
\end{thm}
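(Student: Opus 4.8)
The plan is to follow the Bolker--Crapo route in the normed setting: reduce the infinitesimal rigidity of $\G(\B)$ to a homogeneous linear system in shear variables, show that each brace contributes exactly one equation of a very constrained form, and then recognise the remaining requirement as a rank condition for the frame matroid of a gain graph on $K^2_{m,n}$.

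\emph{Step 1: reduction to a linear system.} Write $\varphi$ for the support map of $\|\cdot\|$, so that a bar in direction $v$ contributes the rigidity equation $\langle u_p-u_q,\varphi(v)\rangle=0$. Differentiability makes $\varphi$ single-valued and strict convexity makes it injective on the unit sphere, hence $\varphi((1,0))$ and $\varphi((0,1))$ are linearly independent; applying the linear map with rows $\varphi((1,0))$ and $\varphi((0,1))$ to all velocity vectors decouples the horizontal bar equations from the vertical ones. In the new coordinates an infinitesimal flex of the unbraced grid is exactly a pair of real sequences $(F(0),\dots,F(n))$ and $(G(0),\dots,G(m))$, and the infinitesimal rigid motions --- which for a non-Euclidean norm are the translations --- are those with $F$ and $G$ constant. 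Thus $\G(\B)$ is infinitesimally rigid precisely when the braces force the differences $\delta_j=F(j)-F(j-1)$ $(1\le j\le n)$ and $\varepsilon_i=G(i)-G(i-1)$ $(1\le i\le m)$ to vanish. A short computation in these coordinates shows that a blue (respectively red) brace on the square in column $i$, row $j$ imposes the single equation $\delta_j=\mu\varepsilon_i$ (respectively $\delta_j=\nu\varepsilon_i$), where $\mu$ and $\nu$ are nonzero real constants that do not depend on $(i,j)$; this normal form, together with the facts that $\G(\B_{\rm max})$ is infinitesimally rigid if and only if $\mu\neq\nu$, and that $\mu\neq-\nu$ in all cases (equality would force $\varphi((1,1))\parallel\varphi((-1,1))$, impossible for a strictly convex smooth unit ball), is what Lemma~\ref{l:braceparams} supplies.

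\emph{Step 2: the combinatorial argument.} Regard $\B$ as a bicoloured multigraph $H\subseteq K^2_{m,n}$, with one edge joining row $j$ to column $i$ for each brace, and assign the gain $\mu$ to each blue edge and $\nu$ to each red edge. Solving the brace equations around a closed walk $e_1,\dots,e_{2k}$ and returning to the start multiplies the initial shear value by $(\mu/\nu)^{\,b-b'}$, where $b$ and $b'$ count the blue edges in odd- and in even-numbered positions; when $\G(\B_{\rm max})$ is infinitesimally rigid the ratio $\mu/\nu$ is real and different from $\pm1$, so this factor equals $1$ exactly when $b=b'$, i.e.\ exactly when the cycle is dependent. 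Assuming $\G(\B_{\rm max})$ infinitesimally rigid, I would argue the two implications as follows. For (ii)$\Rightarrow$(i): given a solution of the brace equations, an independent cycle inside a connected component $C$ of $H$ forces the shear at one of its vertices to be $0$, and, since every gain is nonzero, this forces all $\delta_j$ and $\varepsilon_i$ whose index lies in $C$ to vanish; as $H$ is spanning, every row and column lies in some such $C$, so the only solution is $0$ and $\G(\B)$ is infinitesimally rigid. For (i)$\Rightarrow$(ii): infinitesimal rigidity of $\G(\B_{\rm max})$ is immediate, because enlarging $\B$ only shrinks the flex space; if a row or a column were met by no brace, the corresponding $\delta_j$ or $\varepsilon_i$ would be a nontrivial flex, so $H$ is spanning; and if some component $C$ of $H$ contained only dependent cycles, one could fix a spanning tree of $C$, assign a nonzero value to its root, propagate along the tree, set every other variable to $0$, and so produce a nontrivial flex --- each non-tree edge of $C$ being automatically satisfied because its fundamental cycle is dependent --- contradicting rigidity. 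Equivalently, $\G(\B)$ is infinitesimally rigid if and only if $\B$ has full rank $m+n$ in the frame matroid of the gain graph $H$, that is, if and only if $H$ has no balanced connected component; this is the matroid referred to in the abstract.

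\emph{The main obstacle.} Given Lemma~\ref{l:braceparams}, the work is in Step~1: checking that in the adapted coordinates a brace contributes only one equation $\delta_j=c\,\varepsilon_i$ with $c$ independent of the square, and --- this is what Step~2 leans on --- that $\mu/\nu\neq-1$. The latter is exactly what makes the crude invariant given by the parity of the number of blue edges on a cycle collapse to the correct invariant $b\neq b'$: if $\mu/\nu$ could equal $-1$, an independent cycle with an even colour imbalance would fail to pin the shear and assertion (ii) would be false. Both points come from the geometry of the unit sphere isolated in Lemma~\ref{l:braceparams}; with Step~1 in place, Step~2 is the standard spanning-tree and fundamental-cycle computation of the rank function of the frame matroid of a gain graph.
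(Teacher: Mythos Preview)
Your proposal is correct and follows essentially the same route as the paper. Both reduce to shear variables via the line-flex basis, observe that each brace imposes a single proportionality $\delta_j=c\,\varepsilon_i$ with $c\in\{\lambda,\lambda'\}$ independent of position, and then characterise rigidity by a cycle condition in the bicoloured braces graph; the paper carries this out via the stress--sheer matrix $SS_{\lambda,\lambda'}(\B)$ and an explicit determinant computation (Lemmas \ref{l:indptcycles}--\ref{l:raisondetreNONEUCL}) to identify the matroid's bases as spanning independent cycle-rooted forests, whereas you phrase the same linear algebra as the frame matroid of a $\{\mu,\nu\}$-gain graph and argue the two implications by direct propagation along a spanning tree. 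One cosmetic difference: where you rule out $\mu/\nu=-1$ by appealing to strict convexity ($\varphi((1,1))\nparallel\varphi((-1,1))$), the paper obtains the stronger statement $\lambda,\lambda'>0$ directly from the geometric construction of the brace tangent vectors (the discussion around Equation~(\ref{e:braceparams}) and Figure~\ref{f:braceflex}), which makes the product criterion $\lambda_1\lambda_3\cdots=\lambda_2\lambda_4\cdots$ reduce to equality of blue-edge multiplicities without any sign worry. The paper also notes your gain-graph formulation explicitly, but only later, when generalising to irregularly spaced grids.
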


A simple graph is a \emph{cycle-rooted tree} if there is an edge whose removal gives a tree, and is a \emph{cycle-rooted forest} if each connected components is a cycle-rooted tree.
We extend this tree and forest terminology to bicoloured multigraphs $G$ whose monochrome subgraphs are simple. Thus $G$ is a cycle-rooted tree if there is an edge whose removal gives a tree.
The graph condition in (ii) means that the braces graph contains a spanning subgraph which is an \emph{independent} cycle-rooted forest in the sense that it is a cycle-rooted forest and the unique cycle in each component is independent. Figure \ref{f:nonEgrid} shows a bracing pattern for which the braces graph is actually equal to such a spanning subgraph.
In such cases if the maximally braced graph is infinitesimally rigid then $\G(\B)$ is {minimally infinitesimally rigid}, or isostatic.

\begin{center}
\begin{figure}[ht]
\centering
\includegraphics[width=4.5cm]{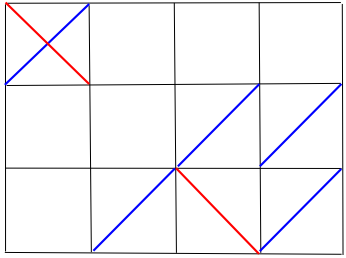}
\caption{A bracing pattern for which the braces graph is a spanning  cycle-rooted forest in $K_{4,3}^2$, where each cycle has an independent colouring.}
\label{f:nonEgrid}
\end{figure}
\end{center}

We also show that Theorem \ref{t:nonEthm} may be generalised to braced grid frameworks where the underlying grid has an irregular spacing, and where independence for cycles and forests is defined in terms of a gain graph formalism.

In the case of norms with 4-fold rotational symmetry, such as the classical norms $\|\cdot\|_p$, with $1 <p<\infty, p\neq 2$, we find that
the maximally braced square grids, $\G(\B_{\rm max})$, of any inclination, are infinitesimally flexible. On the other hand this is not so in the case of properly rectangular grids and the combinatorial condition (ii) applies. 

In Kitson and Power \cite{kit-pow} we began the analysis of the rigidity of bar-joint frameworks $(G,p)$ in non-Euclidean spaces $(\bR^d, \|\cdot\|)$. In particular for the non-Euclidean norms $\|\cdot\|_p, 1 \leq q \leq \infty,$ we obtained analogues of the Laman/Pollaczek-Geiringer combinatorial characterisation of generic rigidity for the Euclidean plane. This has recently been generalised to arbitrary norms by Dewar \cite{dew}. See also Remark \ref{r:22tight}.
For the proof of Theorem \ref{t:nonEthm} however, nongeneric methods are required and we follow a similar path to Bolker and Crapo, relating infinitesimal rigidity to the maximal independent sets in the matroid of an appropriate stress-sheer matrix. The independent sets in this matroid are the independent cycle-rooted forests of $K_{m,n}^2$. 

In the final section we note that there are similar characterisations for infinite braced grids.


\section{Euclidean braced grids}\label{s:BCproof}

A bar-joint framework $\G=(G,p)$ in  $(\bR^d, \|\cdot\|_2)$ is a finite or countable simple graph $G=(V,E)$ together with a placement $p:V \to \bR^d$ of its vertices.
A (real) \emph{infinitesimal flex} of  $\G$  is a vector field $u:p(V) \to \bR^d$ which satisfies the first order flex condition for every bar. In terms of the standard inner product for $\bR^d$ this means that
\[
\langle u(p(v))-u(p(w)), p(v)-p(w)\rangle = 0, \quad \mbox{ for } vw \in E.
\]

A framework is \emph{infinitesimally rigid} if every infinitesimal flex is a \emph{rigid motion infinitesimal flex} \cite{asi-rot}. For an $m \times n$ braced grid framework the vector space of rigid motion flexes coincide with the space of infinitesimal flexes of the fully braced grid, and this is 3-dimensional being spanned by two infinitesimal translations and an infinitesimal rotation.

We first recall that the sufficiency of the braces graph condition for infinitesimal rigidity in the Bolker-Crapo characterisation is  straightforward and follows quickly from the following lemma.


Define a \emph{bar 4-cycle} for a braced grid to be a 4-cycle of bars in the associated unbraced grid framework. 


\begin{lem}\label{l:ABCDlemma}\label{f:2Dprinciple} Let $\G(\B)$ be a  braced $m \times n$ grid in $(\bR^2, \|\cdot\|_2)$ with a triple of braces corresponding to the squares with labels $(m_1, n_1), (m_2, n_1), (m_1, n_2)$. Then the restriction of an infinitesimal flex $z$ of $\G(\B)$ to the bar 4-cycle for the square  with label $(m_2, n_2)$ is a rigid motion flex.
\end{lem}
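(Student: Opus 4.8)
The plan is to use the standard explicit parametrisation of the infinitesimal flex space of an unbraced grid and to reduce the claim to a single linear identity.

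First I would record the shape of an infinitesimal flex $z$ of the unbraced $m\times n$ grid. Placing the grid points at $(i,j)$ with $0\le i\le m$, $0\le j\le n$, the horizontal bars force the first coordinate of $z(i,j)$ to depend only on $j$ and the vertical bars force the second coordinate to depend only on $i$, so $z(i,j)=(f(j),g(i))$ for some real functions $f,g$. Setting $\alpha_j=f(j)-f(j-1)$ and $\beta_i=g(i)-g(i-1)$, a diagonal brace on the square labelled $(i,j)$ — in either of its two diagonal directions — contributes exactly the scalar equation $\alpha_j+\beta_i=0$, since both diagonals of an axis-aligned square have direction vector proportional to $(1,\pm1)$ and the sign is absorbed.

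Next I would check the elementary fact that the restriction of $z$ to the bar 4-cycle of the square labelled $(a,b)$, say with corners at the grid points $(a-1,b-1),(a,b-1),(a-1,b),(a,b)$, is a rigid motion flex precisely when $\alpha_b+\beta_a=0$. Indeed an infinitesimal rigid motion of $\bR^2$ has the form $(x,y)\mapsto(c_1-ty,\,c_2+tx)$; matching this with $z(i,j)=(f(j),g(i))$ at the four corners forces $\alpha_b=-t$ and $\beta_a=t$, which is consistent exactly when $\alpha_b+\beta_a=0$, and in that case taking $t=-\alpha_b$ together with a suitable choice of $c_1,c_2$ exhibits the restriction as a rigid motion.

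Finally, the three hypothesised braces give $\alpha_{n_1}+\beta_{m_1}=0$, $\alpha_{n_1}+\beta_{m_2}=0$ and $\alpha_{n_2}+\beta_{m_1}=0$; subtracting the first from the sum of the other two yields $\alpha_{n_2}+\beta_{m_2}=0$, which by the previous step says exactly that $z$ restricted to the bar 4-cycle of the square labelled $(m_2,n_2)$ is a rigid motion flex, as required. The argument is short and routine; the only point needing a little care is the verification in the middle step, together with the observation that the two diagonal directions of a square produce the same linear constraint — this is what keeps the Euclidean braces graph simple, in contrast with the bicoloured multigraph required in Theorem \ref{t:nonEthm}.
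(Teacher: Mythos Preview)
Your proof is correct. The paper argues more geometrically: it first normalises by a rigid motion so that $z$ vanishes on the braced square $(m_1,n_1)$, then observes that the restrictions of $z$ to the braced squares $(m_2,n_1)$ and $(m_1,n_2)$ must be, respectively, a vertical and a horizontal infinitesimal translation, and hence that the restriction to $(m_2,n_2)$ is an infinitesimal translation. Your route is instead to write the flex explicitly as $z(i,j)=(f(j),g(i))$ and work with the differences $\alpha_j,\beta_i$; this is precisely the stress-sheer viewpoint that the paper sets up in the following subsection --- your $\alpha_j,\beta_i$ are the ribbon sheer values and your brace constraint $\alpha_j+\beta_i=0$ is the vanishing of the brace functional $f_b$. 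Both arguments are short; the paper's is coordinate-free and visual, while yours makes the linear algebra explicit, foreshadows the later machinery, and (as you note) isolates exactly the point --- the two diagonals of an axis-aligned Euclidean square impose the same linear condition --- at which the non-Euclidean story diverges.
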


\begin{proof}Since $\|\cdot\|_2$ is isotropic we may assume that the boundary of the grid is parallel to the coordinate axes.
By adding a rigid motion flex we may assume that the restriction of $z$ to the braced bar 4-cycle for $(m_1, n_1)$ is zero. In view of the linear geometry of the grid the restriction of $z$ to the joints of the braced bar 4-cycle for $(m_2, n_1)$ is an infinitesimal vertical translation. Similarly, the restriction of $z$ to 
the braced bar 4-cycle for $(m_1, n_2)$ is a horizontal infinitesimal translation. By the linear geometry it follows now that at the 4 joints of the bar 4-cycle for $(m_2, n_2)$ the vectors of $z$ have horizontal and vertical components equal to these horizontal and vertical velocities. In particular the restriction of $z$ to the bar 4-cycle for $(m_2, n_2)$ is an infinitesimal translation, as required.
\end{proof}

The lemma implies that if the braces graph $H$ contains the path $(v_1, w_1), (v_2, w_1), (v_2, w_2)$ in $K_{m,n}$ then, in determining the infinitesimal flex space of the braced grid $\G$, we may assume that $H$ also contains $(v_1, w_2)$. Suppose then that $H$ is connected and spanning. Repeating this edge addition principle we can assume that the connected spanning graph $H$ is equal to $K_{m,n}$, the graph associated with the fully braced grid. Since this grid is infinitesimally rigid for the Euclidean norm the sufficiency direction follows. 

The necessity of the condition, that the braced grid is infinitesimally flexible if $H$ is not connected and spanning, is more subtle. It follows from the fact that the property of rigidity or flexibility of a braced grid is unchanged if one performs row and column permutations to change brace positions. This invariance becomes clear on expressing infinitesimal flexes of the unbraced grid in terms of sheering flexes. We now discuss this shift of viewpoint, which is one of main devices in the analysis of braced grids in 2 and 3 dimensions in Bolker-Crapo\cite{bol-cra} and in Bolker \cite{bol}.

\subsection{The stress-sheer matrix}
We adopt the following terminology and notation 
associated with the geometry and linear algebra of a finite grid framework. This will also be useful for non-Euclidean grids.

Let $C(m,n)$, the \emph{$m \times n$ grid}, be the closed subset of the plane which is the union of the boundaries of the squares
$[j,j+1]\times [k,k+1], 0\leq j \leq m, 0\leq k \leq n$. 
 
Let $L_x$ be the set of $n+1$ subsets $l=  [0,m]\times k$, the \emph{lines} of the grid in the $x$-direction. Define $L_y$ similarly and let $L= L_x\cup L_y$, the set of all lines for the grid.

Let $R=R_x\cup R_y$ be the set of $n + m$ \emph{ribbons} of $C(m,n)$. These are rectangles labelled by an adjacent pair of parallel boundary lines $l_1, l_2 \in L$, there being no repetitions if $m+n>1$.

The lines $l$ in $L= L_x\cup L_y$ determine linear subframeworks, or \emph{line frameworks}, of a braced grid framework $\G(\B)$ associated with $C(m,n)$.
For each line $l$ in $L$ let $u_l$ be the unique vector field which vanishes everywhere except on the joints of the line, where it has unit norm and positive direction parallel to the line. It is elementary to show that the set of these vector fields is a basis for the infinitesimal flex space of the unbraced $m\times n$  grid, $\G$ say,  associated with $C(m,n)$. In particular this vector space of flexes has dimension $m+n+2$. 

Let $\M$ (resp.  $\M(\B)$) be the infinitesimal flex space of the unbraced grid $\G$ (resp. $\G(\B)$.
Then
$\M$  is identifiable with the vector space $\bR^{L}$ of functions from $L$ to $\bR$.

Let $\S$ be the vector space $\bR^{R}$ which we refer to as the space of \emph{ribbon sheers}. A ribbon sheer is thus a scalar field on the set of ribbons. The terminology comes from the mechanical viewpoint that the difference of applied velocities along the bounding lines of a ribbon is a sheer. 

Let $\sigma$ be the vector space homomorphism from $\M$ to $\S$ given by a choice of signs, $\sgn(l;\rho)$, for the boundary lines the ribbons $\rho$, with 
\[
\sigma(u)(\rho) = 
\sgn(l_1;\rho)d_{l_1} + \sgn(l_2;\rho)d_{l_2},
\quad \mbox{ for }\rho \in R, ~~~ u = \sum_{l\in L} d_lu_l.
\]
We assume that for a vertical (resp. horizontal) ribbon
the boundary line $l_1$ with $\sgn(l_1;\rho)$ positive is the right hand (resp. lower) line. The other boundary line has negative sign.

Let $\M_{\rm trans}$ (resp. $\M_{\rm rig})$  be the  subspace of $\M$,  and of $\M(\B)$, consisting of translation flexes (resp. rigid motion flexes). 
Note in particular that $\sigma:\M \to \S$ is onto and $\ker \sigma =\M_{\rm trans}$. 

Let $L_x$ (resp. $L_y$) consist of the consecutive lines $l_0, l_1,\dots ,l_m$ (resp. $l_0', l_1',\dots ,l_n'$) and define
\[
u_{\rm rot} =  \sum_{k=0}^m ku_{l_k} + \sum_{k=0}^n -ku_{l_k'}
\]
Then $u_{\rm rot}\in \M(\B)$ is an infinitesimal rotation velocity field which fixes the south-west corner joint of the braced grid. Moreover we have
$\sigma(u_{\rm rot}) =  \ul{1}$, the ribbon sheer field $(1,1,\dots ,1)$ in $\bR^R$. This leads to the following lemma.


\begin{lem}\label{l:sigmaLemma}
Let $\G(\B)$ be a finite braced grid in $(\bR^2, \|\cdot\|_2)$.
Then  the space of rigid motion infinitesimal flexes, $\M_{\rm rig}$, is equal to $\{u: \sigma(u) \in \bR\ul{1}\}$.
\end{lem}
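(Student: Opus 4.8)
The plan is to obtain the lemma purely from the structural facts about $\sigma$ already assembled above, with no further computation on the grid itself. The two inputs are: $\ker\sigma=\M_{\rm trans}$ (so that $\sigma$ descends to an isomorphism $\M/\M_{\rm trans}\to\S$), and the explicit identity $\sigma(u_{\rm rot})=\ul 1$, where $u_{\rm rot}$ is the infinitesimal rotation flex fixing the south-west corner joint.

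First I would prove the inclusion $\M_{\rm rig}\subseteq\{u:\sigma(u)\in\bR\ul 1\}$. In the plane the space of rigid motion infinitesimal flexes is $3$-dimensional, spanned by the two infinitesimal translations together with one infinitesimal rotation; since any two infinitesimal rotations differ by a translation, $u_{\rm rot}$ may be taken as that rotation, so $\M_{\rm rig}=\M_{\rm trans}+\bR u_{\rm rot}$. Applying $\sigma$ and using $\sigma(\M_{\rm trans})=0$ together with $\sigma(u_{\rm rot})=\ul 1$ gives $\sigma(\M_{\rm rig})\subseteq\bR\ul 1$, which is the asserted inclusion.

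For the reverse inclusion, suppose $u\in\M$ with $\sigma(u)=c\,\ul 1$ for some scalar $c$. Then $\sigma(u-c\,u_{\rm rot})=c\,\ul 1-c\,\ul 1=0$, so $u-c\,u_{\rm rot}\in\ker\sigma=\M_{\rm trans}$, and hence $u\in\M_{\rm trans}+\bR u_{\rm rot}=\M_{\rm rig}$. Combining the two inclusions yields the equality $\M_{\rm rig}=\{u:\sigma(u)\in\bR\ul 1\}$; in other words $\sigma^{-1}(\bR\ul 1)=\ker\sigma+\bR u_{\rm rot}$, the elementary fact that the preimage of a line is the kernel plus any vector that maps onto a spanning vector of that line.

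I do not expect a genuine obstacle here: everything substantive---that $u_{\rm rot}$ is a bona fide rigid (rotation) flex lying in $\M$, and that $\sigma(u_{\rm rot})=\ul 1$---has already been checked in the discussion preceding the lemma. The only care needed is bookkeeping: one must know that $\dim\M_{\rm rig}=3$ for the (nondegenerate) $m\times n$ grid and that the translation flexes exhaust $\ker\sigma$, so that $\M_{\rm trans}\oplus\bR u_{\rm rot}$ really is all of $\M_{\rm rig}$ rather than a proper subspace; both facts are recorded above.
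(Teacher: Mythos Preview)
Your proof is correct and follows essentially the same route as the paper: subtract a scalar multiple of $u_{\rm rot}$ to land in $\ker\sigma$ and conclude the flex is a translation plus a rotation. The only cosmetic differences are that you invoke the already-recorded identity $\ker\sigma=\M_{\rm trans}$ directly (the paper re-verifies it by unpacking $\sigma(z')=0$), and you write out the easy inclusion $\M_{\rm rig}\subseteq\sigma^{-1}(\bR\ul 1)$ explicitly whereas the paper leaves it implicit.
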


\begin{proof} Let $z$ be an infinitesimal flex with $\sigma(z)=\lambda\ul{1}$. Subtract $\lambda u_{\rm rot}$ from $z$ to obtain $z'$ with $\sigma(z')=0$. For some set of coefficients $d_l$,  
\[
z' = \sum_{l\in L_x} d_lu_l + \sum_{l\in L_y} d_lu_l. 
\]
The condition $\sigma(z')(\rho)=0$ for every ribbon $\rho$ implies that the coefficients are equal for ${l\in L_x}$ and are also equal for ${l\in L_y}$. Thus $z'$ is an infinitesimal translation and the lemma follows.
\end{proof}

\medskip

The \emph{stress-sheer matrix} of $\G(\B)$, denoted $SS(\B)$, is the $|\B|\times |R|$ matrix where the row labelled by $b\in \B$ has zero entries except for entries of +1 and -1 for the columns for the ribbons for $b$ in the $x$- and $y$-directions, respectively.
The row for the brace $b$ defines a brace functional $f_b:\S=\bR^R \to \bR$, and we can regard the value $f_b(\sigma(u))$ as a stress on the brace $b$ induced by the sheer $\sigma(u)$. The raison d'etre for $SS(\B)$ is the following lemma.

\begin{lem}\label{l:raisondetre} (i) The velocity field
$u\in \M$ restricts to an infinitesimal flex of the braced 4-cycle for $b$ if and only if $f_b(\sigma(u))=0$. 

(ii) $\G(\B)$ is infinitesimally rigid if and only if 
$\rank SS(\B) = |R|-1$.
\end{lem}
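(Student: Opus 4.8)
The plan is to prove (i) by a direct computation rewriting the first-order flex condition of the single brace bar $b$ as a linear condition on the ribbon sheer $\sigma(u)$, and then to deduce (ii) from (i) by a dimension count resting on Lemma~\ref{l:sigmaLemma}. Essentially all the substantive work is in (i); once that is in hand, (ii) is a rank--nullity argument.

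For (i), I would fix the square carrying $b$, say $[j,j+1]\times[k,k+1]$, and write $u=\sum_{l\in L}d_l u_l\in\M$. Because every $u_l$ is already an infinitesimal flex of the unbraced grid $\G$, the restriction of $u$ to the four bars of the 4-cycle of this square is automatically a flex; hence $u$ restricts to a flex of the \emph{braced} 4-cycle for $b$ exactly when, in addition, $\langle u(p)-u(q),\,p-q\rangle=0$ holds at the two endpoints $p,q$ of the diagonal bar $b$. At each corner the velocity of $u$ is $(d_{l},d_{l'})$, where $l\in L_x$ is the horizontal line and $l'\in L_y$ the vertical line through that corner; substituting the four corner velocities into the diagonal condition, the contributions of the two horizontal boundary lines $l_1,l_2$ should combine into $\pm(d_{l_1}-d_{l_2})$ and those of the two vertical boundary lines $l_1',l_2'$ into $\pm(d_{l_1'}-d_{l_2'})$, so the condition collapses to $(d_{l_1}-d_{l_2})=(d_{l_1'}-d_{l_2'})$. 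By the sign convention defining $\sigma$, up to overall sign these are the values $\sigma(u)(\rho_x)$ and $\sigma(u)(\rho_y)$ on the $x$- and $y$-direction ribbons containing the square of $b$, so the condition is exactly $\sigma(u)(\rho_x)-\sigma(u)(\rho_y)=f_b(\sigma(u))=0$. A point worth flagging is that the two possible diagonal directions in the square must produce the \emph{same} identity — this is precisely what makes the colour of $b$ immaterial for the Euclidean norm — and I would check it explicitly.

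For (ii), note that an infinitesimal flex of $\G(\B)$ is precisely a vector field satisfying the bar conditions of the unbraced grid together with the brace conditions, so since the $u_l$ span $\M$, part (i) identifies
\[
\M(\B)=\{\,u\in\M:\ f_b(\sigma(u))=0\ \text{ for all }b\in\B\,\}=\sigma^{-1}\bigl(\ker SS(\B)\bigr),
\]
where $SS(\B)$ is viewed as the linear map $\S=\bR^R\to\bR^{\B}$. Since $\sigma:\M\to\S$ is onto with $\ker\sigma=\M_{\rm trans}$ of dimension $2$, and $\M_{\rm trans}\subseteq\M(\B)$, restricting $\sigma$ to $\M(\B)$ and applying rank--nullity gives $\dim\M(\B)=\dim\ker SS(\B)+2=|R|-\rank SS(\B)+2$. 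As $\M_{\rm rig}\subseteq\M(\B)$ with $\dim\M_{\rm rig}=3$, the framework $\G(\B)$ is infinitesimally rigid if and only if $\M(\B)=\M_{\rm rig}$, i.e. $\dim\M(\B)=3$, i.e. $\rank SS(\B)=|R|-1$. As a consistency check one has $\sigma(u_{\rm rot})=\ul 1$ and $f_b(\ul 1)=1-1=0$ for every $b$, so $\ul 1\in\ker SS(\B)$ and $\rank SS(\B)\le|R|-1$ always, matching $\dim\M(\B)\ge 3$.

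The main obstacle I anticipate is located entirely in (i): keeping straight the incidence relations among the corners, the four boundary lines of the square, and the two ribbons through it, together with the sign conventions defining $\sigma$, and confirming that the resulting identity is the same for either choice of diagonal. There is no conceptual difficulty here — it is careful bookkeeping — but it is the step where a slip would propagate, whereas (ii) then follows mechanically.
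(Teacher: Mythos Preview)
Your proposal is correct and follows essentially the same approach as the paper. For (i) the paper's proof is terser---it observes that $f_b(\sigma(u))=0$ says the two ribbon sheers agree, which in the Euclidean case is equivalent to the restriction being a rigid motion of the bar 4-cycle (hence a flex of the braced 4-cycle)---while you carry out the explicit inner-product computation on the diagonal bar; for (ii) both arguments identify $\M(\B)=\sigma^{-1}(\ker SS(\B))$ and then pass to the rank via surjectivity of $\sigma$, the paper invoking Lemma~\ref{l:sigmaLemma} directly and you using an equivalent dimension count.
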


\begin{proof}(i) $f_b(\sigma(u))=0$ if and only if the 2 ribbons for $b$ have the same sheer values, that is if and only if the 2 sheers of the bar 4-cycle defined by $u$ are equal. For the Euclidean norm this means that $u$ restricts to a rigid infinitesimal motion of the bar 4-cycle for $b$.

(ii) By (i) $u$ is an infinitesimal flex of $\G(\B)$ if and only if $\sigma(u) \in \ker SS(\B)$. Since $\sigma$ is onto, by Lemma \ref{l:sigmaLemma} infinitesimal rigidity holds if and only if $\rank SS(\B) = |R|-1$.
\end{proof}

\noindent {\bf Proof of the Bolker-Crapo theorem.} Let us show once again that the graph condition is sufficient, using Lemma \ref{l:sigmaLemma} in place of Lemma \ref{l:ABCDlemma}.
If $u$ is an infinitesimal flex in $\M$ with sheer field $\sigma(u)$, and if $\sigma(u)$ belongs to $\ker SS(\B)$ then the values of $\sigma(u)$ on the 2 ribbons for each brace $b$ are equal. Thus if the brace-ribbon graph $H$ is connected and spanning we deduce that if $\sigma(u) \in\ker SS(\B)$ then $\sigma(u)$ is constant. Thus $\sigma(u)= \lambda \ul{1}$ for some $\lambda\in \bR$ and Lemma \ref{l:sigmaLemma}, $\G$ is infinitesimally rigid.

Suppose that $H$ is not connected. Then 
there is a permutation of rows and columns of ribbons  resulting in a braced grid with the braces in blocked form, by which we mean the following: there exist $1\leq m_1< m, 1\leq n_1< n$ such that if the bar 4-cycle with label $(k,l)$ is braced then either  $k\leq m_1$ and $l\leq n_1$, or $k> m_1$ and $l> n_1$. Also there is at least 1 braced square in each block. Such braced grids are not infinitesimally rigid since there exists an infinitesimal flex which fixes one block and gives an infinitesimal rotation of the other block. It follows from Lemma \ref{l:raisondetre} that the original unpermuted braced grid also fails to be infinitesimally rigid.  Finally, note that if $H$ is not spanning then there is a ribbon which is free of braces and so the braced grid is not infinitesimally rigid. 
\medskip


\begin{rem}
Every real finite matrix  $A$ defines a matroid, $M(A)$ say, on a finite set $X$ with cardinality equal to the number of rows of the matrix.
The independent sets of the matroid are the subsets of $X$ corresponding to sets of rows which are linearly independent.  Bolker and Crapo use the term \emph{vector geometry} for the matroid defined by a set of row vectors. The Bolker-Crapo theorem for a  braced grid $\G$ may be expressed in an alternative form as a  matroid isomorphism: 
the \emph{stress-sheer matroid} for stress-sheer matrix for the completely braced grid is isomorphic to the graphic matroid of $K_{m,n}$, the braces graph for the completely braced $m \times n$  grid. Indeed, the $mn \times (m+n)$ stress-sheer matrix provides a linear representation of this graphic matroid.

For discussions of the $\|\cdot\|_2$-rigidity of braced grids in $\bR^3$  see Bolker \cite{bol} and Recksi \cite{rec}. In particular Recksi obtains a matroidal condition which is necessary for rigidity. As far as the author is aware there is no combinatorial characterisation of infinitesimal rigidity known for 3D braced grids. (The articles \cite{bol-cra-siam}, \cite{bol-siam} are minor variations of  \cite{bol-cra}, \cite{bol}.)

\end{rem}

\begin{rem} The infinitesimal rigidity of a general bar-joint framework, with nonzero edge lengths, is determined by its $|E| \times d|V|$ \emph{rigidity matrix} $R(G,p)$ \cite{asi-rot}. The combinatorial characterisation of Euclidean plane bar-joint frameworks $(G,p)$ whose joints are \emph{generically positioned} is due to Pollaczek-Geiringer \cite{pol} and Laman \cite{lam}. (See also Bernstein \cite{ber} for a recent matroidal proof.) The condition 
is that $G$ should contain a spanning graph $H$ which is \emph{$(2,3)$-tight} in the sense that $2|V|-|E|=3$ and for each subgraph $H'$, with at least 2 vertices, $2|V'|-|E'|\geq3.$ 
Since the joints of
a grid of squares are not generically placed (the set of $2(n+1)(m+1)$ coordinates is not an algebraically independent set) 
special position considerations are required for a proof of the Bolker-Crapo theorem. In particular if $\G(\B)$ is the 3 by 3 braced grid with the corner squares and the central square braced then this is a bar-joint framework $(G,p)$ which is infinitesimally flexible. On the other hand $G$ is $(2,3)$-tight and so a generic placement $(G,p')$ is infinitesimally rigid.
\end{rem}


\section{Non-Euclidean braced grids}\label{s:nonEthm} 

Let $\G=(G,p)$ be a bar-joint framework in $\bR^2$, with $n$ vertices, and let $\|\cdot\|$ be a general norm for $\bR^2$.
An \emph{infinitesimal flex} with respect $\|\cdot\|$ is vector (or velocity) field $u = (u_1, \ldots, u_n)$, with $u_i\in \bR^2$ for all $i$, such that
\[
\|(p_i+tu_i)-(p_j+tu_j)\|-\|p_i-p_j\|=o(t), \,\,\,\,\,\,\,\, \mbox{ as } t\to 0+,
\]
for each edge $v_iv_j$ of $G$.
Assume that the unit sphere of the normed space $(\bR^2,\|\cdot\|)$ is differentiable at the point $p_2/\|p_2\|$. In this case if
$p_1=(0,0)$  then the velocity field $(0,u_2)$ for the bar $p_1p_2$ is a $\|\cdot\|$-infinitesimal flex  if and only if $u_2$ is zero or is tangential to this unit sphere at the point $p_2$. 

An \emph{infinitesimal rigid motion} or \emph{rigid motion flex} of a bar-joint framework $\G$ in $(\bR^2, \|\cdot\|)$ may be defined as an infinitesimal flex which extends to an infinitesimal flex of any framework  $\G'$ in $(\bR^2, \|\cdot\|)$ which contains $\G$. 
As in the Euclidean setting, a bar-joint framework is \emph{infinitesimally rigid} in $(\bR^2, \|\cdot\|)$ if all its infinitesimal flexes are rigid motion flexes. 

Let us say that an $m \times n$ braced or unbraced grid has \emph{inclination} $\alpha \in [0,\pi/2)$ if it is associated with the image of the closed set $C(m,n)$ under a translation and positive rotation by $\alpha$. Such a framework is equivalent to
its uninclined variant in the normed space $(\bR^2, \|\cdot\|')$, where
the unit sphere for $\|\cdot\|'$ is the counterclockwise rotation by $\alpha$ of the  $\|\cdot\|$-sphere.

Assume for the remainder of this section that the norm $\|\cdot\|$ is {differentiable} and strictly convex, so that that the unit sphere is a curve which has well-defined tangents at all points. For a general non-Euclidean norm the space of rigid motion flexes is the 2-dimensional space of translation flexes \cite{kit-pow}. Under our assumption on the norm this follows on showing that there exists a  framework $(K_4,p)$ whose only infinitesimal flexes are infinitesimal translations.

\subsection{Tangent vectors and 4-fold symmetry.} 
 As in the Euclidean case, there is a natural basis for the space of all $\|\cdot\|$-infinitesimal flexes of an unbraced grid $\G$ in $(\bR^2, \|\cdot\|)$ with inclination $\alpha$. This is the set
$\{u_{l,\alpha}:l\in L\}$ where we label the (now possibly inclined) lines of the grid as before, where $u_{l,\alpha}$ is supported by the joints of the line. The individual velocities at these joints are equal to the unique unit vector which is tangential to a $\|\cdot\|$-sphere centred on an orthogonal line through such a joint, with this centre to the left of the line with respect to its positive direction. This positive direction corresponds to increasing coordinates when the grid is not inclined.
 With $\alpha$ fixed we denote this tangent vector as $u_x$ or $u_y$, according to whether $l$ belongs to $L_x$ or $L_y$.
The vector $u_x$ and an associated basis vector field $u_{l, \alpha}$ are illustrated in the diagrams of Figure \ref{f:inclinedbasis}. 

The first diagram of Figure \ref{f:inclinedbasis} shows tangent vectors to the $\|\cdot\|_2$-sphere and a $\|\cdot\|$-sphere for the particular radial angle, $\theta = \alpha$, measured from a downward radius. For a general radial angle $\theta$ these tangents have angles $\theta$ and $\tau(\theta)$ respectively, for some strictly increasing function
 $\tau: \theta \to \tau(\theta), 0\leq \theta <\pi$. 

\begin{center}
\begin{figure}[ht]
\centering
\includegraphics[width=3.5cm]{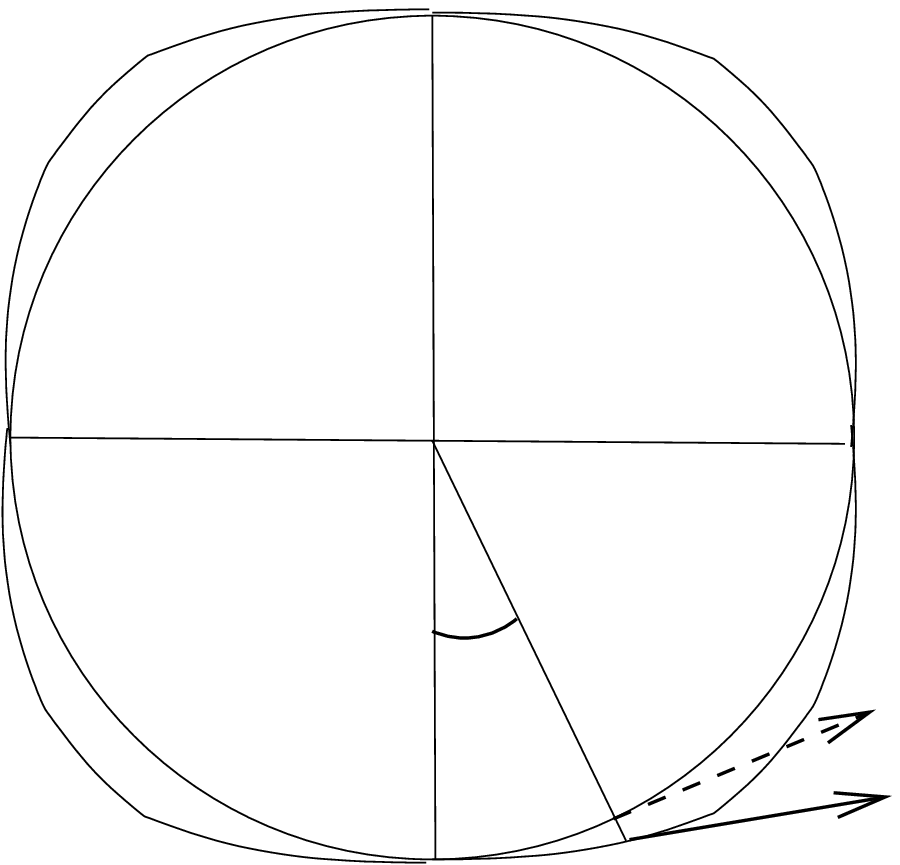}\quad \quad \includegraphics[width=4cm]{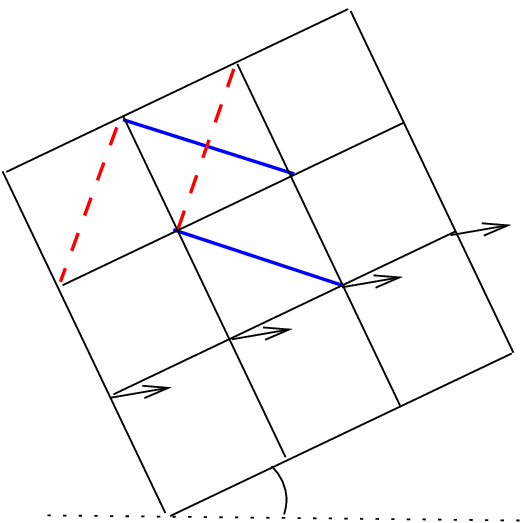}
\caption{(i) Tangent vectors 
to the unit spheres for $\|\cdot\|_2$ and $\|\cdot\|$, and (ii) a corresponding basis element $u_{l, \alpha}$ for the inclined grid framework.}
\label{f:inclinedbasis}
\end{figure}
\end{center}

The normed space $(\bR^2, \|\cdot\|)$ is said to be \emph{4-fold symmetric},
or have 4-fold symmetry, if rotation by $\pi/4$ is an isometry. This is equivalent to the periodicity condition
$\tau(\theta) = \tau(\theta+\pi/4)$, for $0\leq \theta <\pi/4$.
In particular the usual $p$-norms $\|\cdot\|_p, 1 <p<\infty,$ are differentiable strictly convex norms with $4$-fold symmetry.  


\subsection{Brace parameters and stress-sheer matrices}\label{ss:ssmatrix}
We now determine an appropriate stress-sheer matrix for a possibly inclined braced grid $\G(\B)$ in the normed space $(\bR^2, \|\cdot\|)$. This is given in terms of positive real numbers $\lambda, \lambda'$ which we refer to as the \emph{brace parameters} for $\G(\B)$. 

As in the Euclidean case write $\M=\bR^L$ for the vector space of coefficients  representing infinitesimal flexes of the unbraced grid $\G$ with respect to the line-labelled basis $\{u_{l,\alpha}:l\in L\}$.
The space $\S= \bR^R$, of ribbon sheers, and the surjection $\sigma: \M \to \S$ are defined in terms of this basis as before. Thus
\[
\sigma(u)(\rho) = 
\sgn(l_1;\rho)d_{l_1} + \sgn(l_2;\rho)d_{l_2},
\quad \mbox{ for }\rho \in R, ~~~ u = \sum_{l\in L} d_lu_{l,\alpha}.
\]
Once again $\ker \sigma$ is the 2-dimensional space of translation rigid motion flexes.
For a given ribbon $\rho$ let us write $u_\rho$ for the specific infinitesimal flex $u$, of the unbraced grid, with $\sigma(u)(\rho)=1$ and support on the boundary lines of the ribbon. This means that $d_l=\sgn(l;\rho)1/2$ for these  boundary lines and $d_l=0$ otherwise.

The braces of $\B$ are now of 2 types, indicated in Figure \ref{f:inclinedbasis}(ii) as solid (blue) and dashed (red) line segments, and we write $b, b'$ for a pair of these two types for a particular bar 4-cycle.
Consider the $x$-ribbon sheer, $\rho_x$ say, and the $y$-ribbon sheer, $\rho_y $ say,  which are the two basis elements of $\S= \bR^R$ associated with a pair of braces  $b, b'$. By the strict convexity of the norm the vectors $u_x, u_y$ are linearly independent. Also, we claim that there are unique linear combinations of the form
\begin{equation}\label{e:braceparams}
u_b= \lambda u_{x} +u_{y}, \quad \quad u_{b'}=-\lambda'u_{x} +u_{y},\quad \lambda, \lambda' >0,
\end{equation}
such that $u_b$ (resp. $u_{b'}$) is in the tangential direction for $b$
(resp $b'$). That is,  $u_b$ (resp. $u_{b'}$) is parallel to $\tau(\alpha+\pi/4)$ (resp. $\tau(\alpha+\pi/2)$). To see this claim 
it suffices to show that these tangential directions for $b, b'$ do not coincide with the direction of $u_y$ or $-u_y$. However, by the strict convexity of the norm the direction of $u_b$ (resp. $u_{b'}$) is strictly intermediate between the directions of $u_x$ and $u_y$ (resp. $u_x$ and $-u_y$) and so the claim follows. See also Figure \ref{f:braceflex}.
\begin{center}
\begin{figure}[ht]
\centering
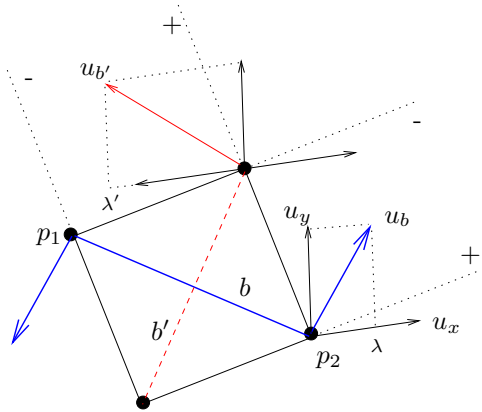
\caption{The brace parameters $\lambda, \lambda'$ are determined by the tangent vectors for $b, b'$ and the tangent vectors $u_x, u_y$.}
\label{f:braceflex}
\end{figure}
\end{center}


The \emph{non-Euclidean stress-sheer matrix} of $\G(\B)$ for the strictly convex differentiable norm $\|\cdot\|$, 
denoted $SS_{\lambda,\lambda'}(\B)$, is the $|\B|\times |R|$ matrix where the row labelled by a solid (blue) brace (resp. dashed (red) brace) has zero entries except for entries of 1 and $-\lambda$ (resp. 1 and $-\lambda'$) for the columns for the ribbons $\rho_x, \rho_y$ associated with the brace.
We view the row for each brace $b\in \B$ as determining a \emph{brace functional} $f_b:\S=\bR^R \to \bR$. The rationale for the definition of 
$SS_{\lambda,\lambda'}(\B)$ is that the velocity field $u\in \M$ restricts to an infinitesimal flex of the bar 4-cycle with added brace $b$ (resp. $b'$) if and only if $f_b(\sigma(u))=0$ (resp. $f_{b'}(\sigma(u))=0$).
To see this in the case of $f_b$ let $u$ be a velocity field with restriction velocity field $(0,u_2)$ for the bar $p_1p_2$ indicated in Figure \ref{f:braceflex}. This is a flex of the bar if and only if $u_2= cu_b = c(\lambda u_x+u_y)$ for some $c\in \bR$. On the other hand note that from the definition of $\sigma$ this is equivalent to 
\[
\sigma(u)(\rho_x) = c\lambda,\quad  \sigma(u)(\rho_y)=c,
\] 
for some $c$, which is the same as $f_b(\sigma(u))=0$. The argument for $f_{b'}$ is similar.

Let $\ul{1}_x$ (resp. $\ul{1}_y$) be the sum of the basis elements for the $x$-ribbons (resp. $y$-ribbons).
If the brace parameters agree then the ribbon sheer field $s=\lambda\ul{1}_x+\ul{1}_{y}$ lies in the nullspace of the stress-sheer matrix and so $u$ is a non rigid motion infinitesimal flex if $\sigma(u)=s$, assuming the norm is not Euclidean. It follows that the fully braced grid (of the same inclination) is not infinitesimally rigid. Thus we have obtained the next lemma.

\begin{lem}\label{l:braceparams} The following are equivalent for a non-Euclidean, differentiable, strictly convex norm $\|\cdot\|$ and an angle $\alpha \in [0,\pi/2)$.

(i) The doubly braced $1 \times 1$ square grid of inclination $\alpha$ is infinitesimally flexible in $(\bR^2, \|\cdot\|)$.

(ii) The brace parameters $\lambda, \lambda'$ for $\|\cdot\|$ and $\alpha$ are equal. 

(iii) A maximally braced $m \times n$ square grid framework of inclination $\alpha$ is infinitesimally flexible in $(\bR^2, \|\cdot\|)$.
\end{lem}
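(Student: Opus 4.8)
The plan is to prove the chain of equivalences $(i)\Leftrightarrow(ii)\Leftrightarrow(iii)$ by exploiting the stress-sheer matrix machinery developed above, using the fact that the $1\times 1$ and general $m\times n$ cases are governed by the same brace parameters $\lambda,\lambda'$ attached to the inclination $\alpha$.

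\medskip

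\emph{Plan of proof.} For $(ii)\Rightarrow(iii)$: suppose $\lambda=\lambda'$. As noted in the paragraph preceding the lemma, the ribbon sheer field $s=\lambda\ul{1}_x+\ul{1}_y\in\S=\bR^R$ is then annihilated by every brace functional $f_b$ (each blue row is $(1,-\lambda)$ on its two ribbons, each red row is $(1,-\lambda')=(1,-\lambda)$, so both contract $s$ to $\lambda\cdot 1-\lambda\cdot 1=0$), hence $s\in\ker SS_{\lambda,\lambda'}(\B_{\rm max})$. Pick $u\in\M$ with $\sigma(u)=s$; since $\ker\sigma=\M_{\rm trans}$ is $2$-dimensional and $u_{\rm rot}$-type flexes have sheer field constant on $x$-ribbons and on $y$-ribbons, $s$ is not a scalar multiple of the constant field $\ul 1$ unless $\lambda=1$ — and even when $\lambda=1$ one checks $s=\ul 1\notin\sigma(\M_{\rm rig})$ is false, so we instead argue directly: $u$ is a genuine infinitesimal flex of $\G(\B_{\rm max})$ by the ``rationale'' paragraph (each brace 4-cycle flexes because $f_b(\sigma(u))=0$), and $u$ is not a rigid motion flex because for a non-Euclidean norm $\M_{\rm rig}=\M_{\rm trans}=\ker\sigma$ while $\sigma(u)=s\neq 0$. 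Thus $\G(\B_{\rm max})$ is infinitesimally flexible. The implication $(iii)\Rightarrow(i)$ is immediate: the maximally braced $m\times n$ grid contains the maximally (doubly) braced $1\times 1$ subsquare as a subframework on a bar $4$-cycle, and if the larger framework is flexible then, restricting a non-rigid flex via Lemma \ref{l:raisondetre}(i) to a single $4$-cycle that witnesses nontriviality, one gets a non-rigid flex of the $1\times 1$ doubly braced square; more cleanly, the $m\times n$ stress-sheer matrix for $\B_{\rm max}$ has nontrivial nullspace beyond $\bR\ul 1$, and by the block structure any such nullvector restricts to a pair of equal-sheer ribbons only if $\lambda=\lambda'$, which by the $1\times 1$ computation forces flexibility there.

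\medskip

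The core remaining implication is $(i)\Rightarrow(ii)$, equivalently $\lambda\neq\lambda'\Rightarrow$ the doubly braced square is infinitesimally rigid. Here I would compute directly with the $1\times 1$ stress-sheer matrix: there are exactly two ribbons, $\rho_x$ and $\rho_y$, so $\S\cong\bR^2$, and the two brace rows of $SS_{\lambda,\lambda'}(\B_{\rm max})$ are $(1,-\lambda)$ and $(1,-\lambda')$. The rank of this $2\times 2$ matrix is $2$ precisely when $\lambda\neq\lambda'$, i.e. $\rank SS_{\lambda,\lambda'}(\B_{\rm max})=2=|R|-1$ with $|R|=3$? — no: for the $1\times 1$ grid $|R|=m+n=2$, so the Euclidean criterion ``$\rank=|R|-1$'' is not the right non-Euclidean statement, and the correct statement is that $\G(\B_{\rm max})$ is infinitesimally rigid iff $\ker SS_{\lambda,\lambda'}(\B_{\rm max})=0$, since $\sigma$ surjects onto $\S$ and $\ker\sigma=\M_{\rm rig}$ by non-Euclideanness. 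So when $\lambda\neq\lambda'$ the two rows span $\bR^2$, the nullspace is trivial, the only flexes have $\sigma(u)=0$, i.e. are translations, hence rigid. This nails $(i)\Rightarrow(ii)$ for the square.

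\medskip

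\emph{Anticipated main obstacle.} The delicate point is establishing, carefully and in the non-Euclidean setting, that ``infinitesimally rigid'' for these braced grids is equivalent to ``$\ker SS_{\lambda,\lambda'}(\B)=0$'' rather than to the Euclidean ``$\rank=|R|-1$'': the shift arises because for a general norm $\M_{\rm rig}$ is the $2$-dimensional translation space $\ker\sigma$ and does \emph{not} contain the rotational flex $u_{\rm rot}$ (whose sheer field is $\ul 1\neq 0$). One must therefore re-derive the analogue of Lemma \ref{l:sigmaLemma}/\ref{l:raisondetre}(ii) for non-Euclidean norms — namely $u$ is a flex of $\G(\B)$ iff $\sigma(u)\in\ker SS_{\lambda,\lambda'}(\B)$, $u$ is rigid iff $\sigma(u)=0$, so $\G(\B)$ is infinitesimally rigid iff $\ker SS_{\lambda,\lambda'}(\B)=0$ — and verify the promised claim that a non-Euclidean norm admits a $(K_4,p)$ with only translational flexes, so that $\M_{\rm rig}=\M_{\rm trans}$ genuinely holds. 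Once that bookkeeping is in place, the equivalences above are short linear algebra; the only other small care needed is handling the degenerate possibility $\lambda=1$ or $\lambda'=1$ in the $(ii)\Rightarrow(iii)$ direction, which does not in fact cause trouble since we argue via $\sigma(u)=s\neq 0$ directly rather than via constancy of the sheer field.
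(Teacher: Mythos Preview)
Your approach is the same as the paper's: both rely on the stress--sheer matrix $SS_{\lambda,\lambda'}(\B)$, the identification $\M_{\rm rig}=\M_{\rm trans}=\ker\sigma$ for a non-Euclidean norm, and the nullspace vector $s=\lambda\ul{1}_x+\ul{1}_y$ when $\lambda=\lambda'$. The paper's ``proof'' is in fact just the paragraph immediately preceding the lemma, which writes out only $(ii)\Rightarrow(iii)$ and leaves the rest implicit; your write-up makes the remaining implications explicit via the $2\times 2$ stress--sheer matrix for the $1\times 1$ square, which is exactly what is intended.

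One genuine wobble to fix: your first attempt at $(iii)\Rightarrow(i)$, by restricting a non-rigid flex of the $m\times n$ grid to a single bar $4$-cycle, does not work --- a non-rigid flex of the large grid need not restrict to a non-rigid flex of any particular doubly braced square. Your fallback argument is the right one and can be phrased cleanly as $(iii)\Rightarrow(ii)$: a nonzero $s\in\ker SS_{\lambda,\lambda'}(\B_{\rm max})$ satisfies, for each cell, both $s(\rho_x)=\lambda\, s(\rho_y)$ and $s(\rho_x)=\lambda'\, s(\rho_y)$; if $\lambda\neq\lambda'$ this forces $s(\rho_x)=s(\rho_y)=0$ on every cell, hence $s=0$, a contradiction. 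Also, the digression about ``$\lambda=1$'' in your $(ii)\Rightarrow(iii)$ paragraph is unnecessary and slightly confused: since $\M_{\rm rig}=\ker\sigma$ here, $\sigma(u)=s\neq 0$ already shows $u$ is non-rigid, with no case analysis needed.
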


It is straightforward to see that a monochrome cycle of edges in the braces graph gives a circuit in the matroid $M(SS_{\lambda,\lambda'}(\B))$. More generally we have the following.

\begin{lem}\label{l:indptcycles}
Let $\lambda\neq \lambda'$. A cycle of edges in $K_{m,n}^2$ gives a circuit in the matroid $M(SS_{\lambda,\lambda'}(\B_{\rm max}))$ if and only if its colouring is a dependent colouring.
\end{lem}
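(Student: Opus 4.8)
The plan is to read the row of $SS_{\lambda,\lambda'}(\B_{\max})$ indexed by an edge $e$ of $K_{m,n}^2$ joining $\rho_x\in R_x$ to $\rho_y\in R_y$ as the incidence-type vector $\mathbf 1_{\rho_x}-\mu_e\,\mathbf 1_{\rho_y}\in\bR^R$, where $\mu_e=\lambda$ if $e$ is blue and $\mu_e=\lambda'$ if $e$ is red. (This is exactly an incidence vector of a gain graph on the vertex set of $K_{m,n}$ with multiplicative gains in $\bR_{>0}$, which is the formalism used for the irregular-spacing generalisation mentioned in the introduction.) I would then compute the linear dependences of these vectors directly along a cycle.

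Let $e_1,\dots,e_{2k}$ be a cycle with consecutive vertices $v_0,v_1,\dots,v_{2k}=v_0$ and $e_i=v_{i-1}v_i$; since $K_{m,n}$ is bipartite we may take $v_0,v_2,\dots\in R_x$ and $v_1,v_3,\dots\in R_y$. A combination $\sum_i c_i(\text{row of }e_i)$ vanishes precisely when the coefficient at every cycle vertex vanishes, which gives one relation $c_{2t}+c_{2t+1}=0$ at each $R_x$-vertex $v_{2t}$ (both incident edge-rows contribute $+1$ there) and one relation $\mu_{e_{2t+1}}c_{2t+1}+\mu_{e_{2t+2}}c_{2t+2}=0$ at each $R_y$-vertex $v_{2t+1}$ (the rows contribute $-\mu$ there), with indices read modulo $2k$. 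Propagating these relations around the cycle forces each $c_i$ to be $c_0$ times a product of ratios of the $\mu_{e_j}$, and the single closing relation reduces, up to a nonzero factor, to
\[
c_0\Bigl(\prod_{t=1}^{k}\mu_{e_{2t-1}}-\prod_{t=1}^{k}\mu_{e_{2t}}\Bigr)=0 .
\]
Hence when $\prod_t\mu_{e_{2t-1}}\neq\prod_t\mu_{e_{2t}}$ the only dependence is trivial, so the edge-rows of the cycle are linearly independent and the cycle is independent in $M(SS_{\lambda,\lambda'}(\B_{\max}))$; while when $\prod_t\mu_{e_{2t-1}}=\prod_t\mu_{e_{2t}}$ the space of dependences is exactly one-dimensional, with a generator all of whose coordinates are nonzero (products of positive reals), so the cycle is a \emph{minimal} dependent set, i.e.\ a circuit.

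It then remains to match the algebra with the colouring condition. Writing $a$ and $b$ for the numbers of blue edges among $e_1,e_3,\dots,e_{2k-1}$ and among $e_2,e_4,\dots,e_{2k}$ respectively, we have $\prod_t\mu_{e_{2t-1}}=\lambda^{a}(\lambda')^{k-a}$ and $\prod_t\mu_{e_{2t}}=\lambda^{b}(\lambda')^{k-b}$, so equality of the two products is equivalent to $(\lambda/\lambda')^{a-b}=1$; since $\lambda\neq\lambda'$ and $\bR_{>0}$ is torsion-free this holds exactly when $a=b$, that is, exactly when the colouring of the cycle is dependent. Together with the previous paragraph this gives the stated equivalence.

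The bookkeeping of the middle step is where the care lies: keeping straight the alternation of $R_x$- and $R_y$-vertices, the two families of vertex relations with their differing ($+1$ versus $-\mu$) coefficients, and the closing-up identity. The only point going beyond a bare dependent/independent dichotomy is the observation that in the dependent case the dependence space is one-dimensional with a full-support generator — this is what upgrades ``dependent'' to ``circuit'' — and it falls straight out of the same propagation.
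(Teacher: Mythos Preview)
Your proof is correct and follows essentially the same route as the paper: both compute when the rows indexed by a $2k$-cycle are linearly dependent and arrive at the product condition $\prod_{t}\mu_{e_{2t-1}}=\prod_{t}\mu_{e_{2t}}$, then translate this into the colour-count equality using $\lambda,\lambda'>0$ and $\lambda\neq\lambda'$. The paper does the linear algebra by writing out the $2k\times 2k$ submatrix (illustrated for $k=3$) and reducing it by row operations to read off the determinant; you do the equivalent computation by propagating the coefficients of a putative dependence around the cycle and reading off the closing condition. One small point in your favour: you explicitly note that in the dependent case the dependence space is one-dimensional with full support, which is precisely what is needed to conclude \emph{circuit} rather than merely \emph{dependent}; the paper's proof is tacit on this point. (Minor quibble: your coefficients should be indexed $c_1,\dots,c_{2k}$ to match the edges $e_1,\dots,e_{2k}$; the reference to ``$c_0$'' is a slip.)
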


\begin{proof}
For notational convenience consider a coloured 6-cycle $e_1, \dots ,e_6$  in $K^2_{m,n}$. The nonzero entries of the rows of $SS_{\lambda,\lambda'}(\B_{\rm max})$ determine a $6 \times 6$ matrix of the form
\[
\left[\begin{array}{cccccc}
 1 & 0&0 &\lambda_1& 0&0 \\
 1 & 0&0 &0&\lambda_2& 0 \\
 0 & 1&0 &0&\lambda_3& 0 \\
 0 & 1&0 &0&0&\lambda_4 \\
 0 & 0&1 &0&0&\lambda_5 \\
 0 & 0&1 &\lambda_6&0&0
 \end{array}\right].
 \] 
with $\lambda_i\in \{-\lambda, -\lambda'\}$, for $1\leq i \leq 6$. Under row operations this matrix is equivalent to 
\[
\left[\begin{array}{cccccc}
 1 & & &\lambda_1& & \\
  & 1& & &\lambda_3&  \\
  & & 1 &&&\lambda_5 \\
 0 & & &-\lambda_1&\lambda_2&0 \\
  & 0& &0&-\lambda_3&\lambda_4 \\
  & &0 &\lambda_6&0&-\lambda_5
 \end{array}\right]
 \] 
and this has zero determinant if and only if 
\[
\lambda_1\lambda_3\lambda_5 =\lambda_2\lambda_4\lambda_6.
\] 
Similarly, a coloured $2n$-cycle gives a dependent set in the stress-sheer matroid if and only if the associated odd and even products are equal. Since $\lambda, \lambda' >0$ this is the case if and only if
the multiplicities of $\lambda$ in the odd and even products are the same, as required.
\end{proof}

A subgraph of the bicoloured graph $K_{m,n}^2$ is an \emph{independent  cycle-rooted tree} if it is a cycle-rooted tree whose unique cycle is \emph{independent}. A  \emph{independent cycle-rooted forest} is a subgraph whose components are independent cycle-rooted trees. In an extreme case each component could be a bicoloured cycle with 2 vertices.

\begin{lem}\label{l:circuitsETC} 
Let $\lambda\neq \lambda'$.
Then the maximal independent sets of the matroid $M(SS_{\lambda,\lambda'}(\B_{\rm max}))$ are the independent cycle-rooted forests which are spanning subgraphs of $K_{m,n}^2$.
\end{lem}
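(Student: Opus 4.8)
The plan is to pin down the independent sets of the matroid $M(SS_{\lambda,\lambda'}(\B_{\rm max}))$ combinatorially and then read off the maximal ones. Throughout I would identify a brace with its edge in $K^2_{m,n}$ and with the corresponding row of $SS_{\lambda,\lambda'}(\B_{\rm max})$, and exploit the one structural feature that matters: each such row has nonzero entries in exactly two columns, the $x$-ribbon and the $y$-ribbon of the brace. Hence the rows indexed by an edge set $S$ are supported on the columns labelled by the vertices of $K^2_{m,n}$ incident to $S$, and in particular $\rank(S)$ is at most the number of such vertices. Here ``cycle'' always means a graph cycle, allowing the bicoloured $2$-cycle on a parallel pair of edges; I will also tacitly assume $m,n\ge 1$ so that both sides of the bipartition are nonempty.

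First I would establish two independence facts by leaf-peeling. (a) Any forest in $K^2_{m,n}$ gives linearly independent rows: in a vanishing linear combination a leaf edge must carry coefficient $0$, since its leaf column receives a contribution from no other edge, and one removes leaves inductively. It follows that every proper subset of the edges of a cycle $Z$ is independent, so $Z$ is either a circuit or an independent set of the matroid; by Lemma~\ref{l:indptcycles}, $Z$ is independent precisely when its colouring is independent. (b) More generally, if every component of a subgraph $H\subseteq K^2_{m,n}$ is a tree or a cycle-rooted tree whose cycle is independent, then the rows of $H$ are linearly independent: peel leaves simultaneously in all components until only the pairwise vertex-disjoint independent cycles remain, and apply (a) to each, using that the columns of distinct components are disjoint. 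This statement (b) is the technical engine of the whole argument.

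For the inclusion ``spanning independent cycle-rooted forest $\Rightarrow$ maximal independent set'', I would argue as follows. Given such an $F$, each component on $k$ vertices is a cycle-rooted tree and so has exactly $k$ edges; summing over components gives $|E(F)|=m+n$. By (b) the rows of $E(F)$ are linearly independent, so $E(F)$ is independent in the matroid. It is maximal because adjoining any edge $e\notin E(F)$ yields a connected subgraph $H$ containing two distinct cycles (the cycle of the component of $e$'s first endpoint, and a second cycle created through $e$, or the two cycles of the two components joined by $e$); then $|E(H)|\ge |V(H)|+1>|V(H)|\ge \rank(E(H))$, so $E(F)\cup\{e\}$ is dependent. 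For the converse, let $B$ be a maximal independent set, viewed as the spanning subgraph $(V,B)$ with $V$ the full vertex set of size $m+n$. Independence forces $\epsilon_C\le\nu_C$ in each component $C$ (its $\epsilon_C$ rows lie in the $\nu_C$ columns of $C$), so $C$ is a tree or is unicyclic; in the unicyclic case the unique cycle is not a circuit, hence independently coloured by Lemma~\ref{l:indptcycles}, so $C$ is an independent cycle-rooted tree.

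The step I expect to need the most care is ruling out tree components of $(V,B)$ by maximality. If $v$ is an isolated vertex of $(V,B)$, I would adjoin one of the parallel edges $vw$ for a vertex $w$ on the opposite side of the bipartition; the new row occupies the column $v$, which is untouched by $B$, so independence is preserved, contradicting maximality. If $C$ is a tree component with an edge $e'$, say blue, then its parallel red edge $e$ is not in $B$ — otherwise $C$ would contain the $2$-cycle $\{e',e\}$ — and adjoining $e$ turns $C$ into a cycle-rooted tree whose only cycle, the bicoloured $2$-cycle $\{e',e\}$, is independently coloured (one blue, one red), so $B\cup\{e\}$ is independent by (b), again contradicting maximality. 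Hence every component of $(V,B)$ is an independent cycle-rooted tree and $B$ is a spanning independent cycle-rooted forest, completing the equivalence. The only genuinely delicate points are the bookkeeping in (b) and checking that the parallel-edge and isolated-vertex extensions in this last paragraph really do preserve matroid independence; everything else is routine graph combinatorics.
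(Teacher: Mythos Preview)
Your proof is correct and follows essentially the same route as the paper: both directions hinge on Lemma~\ref{l:indptcycles} together with the column-support bound $|E(C)|\le|V(C)|$ for independent components, and both rule out tree components of a maximal independent set by adjoining a parallel edge of the opposite colour. Your treatment is in fact more complete than the paper's, which asserts the independence of a spanning independent cycle-rooted forest ``by the previous lemma'' without spelling out the leaf-peeling reduction you give in~(b), and which does not separately handle isolated vertices.
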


\begin{proof} An independent cycle-rooted forest which is spanning has $m+n$ edges and so, by the previous lemma and the fact that 
$SS_{\lambda,\lambda'}(\B_{\rm max})$ has $m+n$ columns, it follows that
it is a maximal independent set in the matroid. 
On the other hand if $F$ is a maximal independent set of edges in the matroid then it has $m+n$ edges. Each component cannot be a tree for otherwise an appropriately coloured edge could be added to create an independent cycle-rooted tree. Thus each component contains an independent cycle rooted-tree and by the previous lemma must be equal to it.
\end{proof}

\begin{lem}\label{l:raisondetreNONEUCL}
Let $\G(\B)$ be a braced grid with brace parameters $\lambda \neq  \lambda'$. Then
$\G(\B)$ is infinitesimally rigid
if and only if $SS_{\lambda,\lambda'}(\B)$ has rank $m+n$.
\end{lem}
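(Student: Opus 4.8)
The plan is to mirror the proof of Lemma~\ref{l:raisondetre}, using the non-Euclidean stress-sheer matrix $SS_{\lambda,\lambda'}(\B)$ in place of the Euclidean one. First I would observe that every infinitesimal flex of $\G(\B)$ is in particular an infinitesimal flex of the unbraced grid $\G$, hence lies in $\M = \bR^L$ relative to the line-labelled basis $\{u_{l,\alpha}\}$. By the rationale already established for $SS_{\lambda,\lambda'}(\B)$ --- that $u \in \M$ restricts to an infinitesimal flex of the bar 4-cycle carrying the brace $b$ (resp.\ $b'$) if and only if $f_b(\sigma(u)) = 0$ (resp.\ $f_{b'}(\sigma(u)) = 0$) --- a velocity field $u \in \M$ is an infinitesimal flex of $\G(\B)$ exactly when $f_b(\sigma(u)) = 0$ for every brace $b \in \B$, i.e.\ when $\sigma(u) \in \ker SS_{\lambda,\lambda'}(\B)$. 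Thus the flex space $\M(\B)$ of $\G(\B)$ is $\sigma^{-1}\big(\ker SS_{\lambda,\lambda'}(\B)\big)$.

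Next I would carry out the dimension count. Since $\sigma : \M \to \S = \bR^R$ is surjective with kernel the $2$-dimensional translation space $\M_{\rm trans}$, its restriction to $\M(\B) = \sigma^{-1}(\ker SS_{\lambda,\lambda'}(\B))$ is a surjection onto $\ker SS_{\lambda,\lambda'}(\B)$ with the same kernel $\M_{\rm trans}$, so
\[
\dim \M(\B) = 2 + \dim \ker SS_{\lambda,\lambda'}(\B) = 2 + (m+n) - \rank SS_{\lambda,\lambda'}(\B),
\]
using that $SS_{\lambda,\lambda'}(\B)$ has $|R| = m + n$ columns. Finally, under the standing hypotheses on $\|\cdot\|$ the space $\M_{\rm rig}$ of rigid motion flexes of any framework in $(\bR^2,\|\cdot\|)$ equals the $2$-dimensional space $\M_{\rm trans}$, and $\M_{\rm trans} \subseteq \M(\B)$ always; hence $\G(\B)$ is infinitesimally rigid if and only if $\dim \M(\B) = 2$, which by the displayed identity happens if and only if $\rank SS_{\lambda,\lambda'}(\B) = m + n$.

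I do not expect a genuine obstacle here: the substance is carried by the already-proved facts that $\sigma$ is onto with $2$-dimensional kernel, that flexes of the bar 4-cycles are detected by the brace functionals $f_b$, and that rigid motions of a non-Euclidean normed plane are exactly the translations. The one point needing care is the rank--nullity bookkeeping --- namely that passing from a flex of $\G(\B)$ to its ribbon sheer field is a linear surjection onto $\ker SS_{\lambda,\lambda'}(\B)$ with a fixed $2$-dimensional kernel, so that infinitesimal rigidity is literally the statement that this kernel is trivial. Incidentally the hypothesis $\lambda \neq \lambda'$ plays no role in this particular equivalence; it is stated for consistency, since the lemma is used together with Lemma~\ref{l:circuitsETC}, whose matroid description does require $\lambda \neq \lambda'$.
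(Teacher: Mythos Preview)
Your proposal is correct and follows essentially the same approach as the paper's proof: both identify $\M(\B)$ with $\sigma^{-1}(\ker SS_{\lambda,\lambda'}(\B))$ via the brace functionals, invoke that $\ker\sigma$ is the 2-dimensional translation space, and use that rigid motion flexes for a non-Euclidean norm are exactly translations, so infinitesimal rigidity is equivalent to $\ker SS_{\lambda,\lambda'}(\B)=\{0\}$. Your write-up is simply more explicit about the rank--nullity bookkeeping, and your observation that the hypothesis $\lambda\neq\lambda'$ is not actually used in this particular equivalence is correct.
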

\begin{proof}
It follows from the definition of the brace functionals that a velocity field $u\in \M$ restricts to an infinitesimal flex of the bar 4-cycle with added braces $b, b'$ if and only if $f_b(\sigma(u))=0$ and $f_{b'}(\sigma(u))=0$. Also $\ker \sigma$ is the space of infinitesimal translations.
Thus $\G(\B)$ is infinitesimally rigid if and only if 
$\ker SS_{\lambda,\lambda'}(\B) =\{0\}$.
\end{proof}

\noindent{\bf The proof of Theorem \ref{t:nonEthm}.}
Suppose that $\G(\B)$ is infinitesimally rigid.
Then the brace parameters must be distinct, by Lemma \ref{l:braceparams}, and so
$\rank SS_{\lambda,\lambda'}(\B) =m+n$ by Lemma \ref{l:raisondetreNONEUCL}. Thus there exists an independent set of $m+n$ rows.
By Lemma \ref{l:circuitsETC}  these rows correspond to the edges of a independent cycle-rooted forest, and so the bicoloured graph condition in (ii) follows.

On the other hand if $H\subset K_{m_1, n_2}^2$
is a spanning independent cycle-rooted tree then by Lemma \ref{l:circuitsETC} the rows of $SS_{\lambda,\lambda'}(\B)$
are a maximal linearly independent subset and $\rank SS_{\lambda,\lambda'}(\B)= m+n$, completing the proof.
\medskip


\subsection{Irregularly spaced grids}
Consider the bar-joint frameworks $\G(\B)$ arising from an irregularly spaced $m \times n$ grid framework $\G$, aligned with the coordinate axes, and a set $\B$ of diagonal braces. Assuming that the underlying norm is differentiable and strictly convex there is a set of positive brace parameters, $\Lambda$ say, together with a 
stress-sheer matrix $SS_\Lambda(\B)$ defined as before. Lemma \ref{l:indptcycles} suggests the following gain-graph formalism, with edge-labelling by elements of the abelian group $\bR_+$.

Denote the braces and their parameters as $b_e$ and $\lambda_e$, where $e$ is a coloured edge of the braces graph. This graph is bipartite with vertices $v_1, \dots , v_m, w_1,\dots , w_n$. Choose an associated edge directedness, so that an edge is positively directed from its $v$-labelled vertex to its $w$-labelled vertex. Also, define the directed edge gain map $\gamma: E(K_{m,n}^2) \to\bR_+$ where $\gamma(e)=\lambda_e$ when $e$ is positively directed.
Following Lemma \ref{l:indptcycles} we see that a directed cycle $c$ in the braces graph corresponds to a dependent set in $M(SS_\Lambda(\B))$ if and only if $\gamma(c)=1$, that is, if and only if the cycle has no gain. As before we say that the cycle is \emph{dependent} in this case and \emph{independent} otherwise. The statement and proof of Theorem \ref{t:nonEthm} generalise, with no essential changes, to irregularly spaced grids.

\begin{rem} The matroid theory of signed graphs and gain graphs has been developed extensively by Zaslavsky. See for example \cite{zas-1}, \cite{zas-2}. In fact Bolker \cite{bol} made use of signed graphs to determine the circuits for the 3-dimensional braced cube grid rigidity matroid in graphic terms.

 For a (monochrome) simple graph the matroid whose independent sets are the cycle-rooted forests is known as the bicycle matroid \cite{whi-union}. Also cycle-rooted spanning forests appear in  expansion formulae for the determinant of the combinatorial Laplacian on a graph \cite{ken}. 
\end{rem}

We now consider the special case of the classical $p$-norms, $\|\cdot\|_p$ with $\|(x,y)\|_p^p = |x|^p+|y|^p$.

\begin{thm}\label{t:lpgrids}
Let $1< p < \infty$ with $p\neq 2$ and let $\G(\B)$ be a diagonally braced $m\times n$ grid of congruent orthogonal rectangles with inclination $\alpha \in [0,\pi/2)$.

(i) If the rectangles are squares then $\G(\B)$ is infinitesimally flexible in $(\bR^2, \|\cdot\|_p)$.

(ii) If the rectangles are not squares then $\G(\B)$ is infinitesimally rigid in $(\bR^2, \|\cdot\|_p)$ if and only if the braces graph contains an independent cycle-rooted forest which is spanning. 
\end{thm}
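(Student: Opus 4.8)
The plan is to reduce both parts to Theorem~\ref{t:nonEthm} by deciding, in each case, whether the two brace parameters $\lambda,\lambda'$ of the grid coincide. An $m\times n$ grid of congruent rectangles of inclination $\alpha$ becomes, after the invertible linear change of coordinates $v\mapsto \mathrm{diag}(1/a,1/b)R_{-\alpha}v$, a unit square grid carrying the differentiable strictly convex norm $N(v)=\|R_\alpha\,\mathrm{diag}(a,b)v\|_p$, which is non-Euclidean when $p\neq2$; so Lemma~\ref{l:braceparams} and Theorem~\ref{t:nonEthm} apply to it (equivalently, one may appeal directly to the irregularly spaced version of Theorem~\ref{t:nonEthm}, noting that here there are only the two brace parameters $\lambda,\lambda'$). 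By Lemma~\ref{l:braceparams} the maximally braced grid $\G(\B_{\rm max})$ is infinitesimally rigid precisely when $\lambda\neq\lambda'$, so once the dichotomy is settled the theorem reads off from Theorem~\ref{t:nonEthm}: if $\lambda=\lambda'$ then no choice of $\B$ makes $\G(\B)$ infinitesimally rigid, while if $\lambda\neq\lambda'$ then $\G(\B)$ is infinitesimally rigid if and only if its braces graph contains a spanning independent cycle-rooted forest. It thus suffices to prove $\lambda=\lambda'$ for squares and $\lambda\neq\lambda'$ for properly rectangular cells.

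For part~(i), suppose the rectangles are congruent squares. Rotation through $\pi/2$ is an isometry of $(\bR^2,\|\cdot\|_p)$ and a symmetry of a square of every inclination, and it interchanges the two diagonal brace classes together with the tangent directions $u_x,u_y$; tracing this through the defining relations~\eqref{e:braceparams} forces the two brace parameters of the square to agree. Hence, by Lemma~\ref{l:braceparams}, the maximally braced square grid of every inclination is infinitesimally flexible in $(\bR^2,\|\cdot\|_p)$, and then Theorem~\ref{t:nonEthm} shows that no braced square grid $\G(\B)$ is infinitesimally rigid. This is the easy half.

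For part~(ii) the crux is to show that when the congruent rectangles are not squares one has $\lambda\neq\lambda'$. I would argue directly with the unit sphere $\{(x,y):|x|^p+|y|^p=1\}$ of $\|\cdot\|_p$ and its tangent-angle function $\tau$ from Section~\ref{s:nonEthm}. If $\gamma\neq\pi/4$ denotes the diagonal inclination of a cell, then $u_x,u_y,u_b,u_{b'}$ are the tangent directions of the sphere at four radial angles determined by $\alpha$ and $\gamma$, and solving the two $2\times2$ systems in~\eqref{e:braceparams} expresses $\lambda$ and $\lambda'$ as explicit ratios of sines of these tangent angles; the equation $\lambda=\lambda'$ then becomes a single relation in $\alpha$. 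Using the symmetry identities for $\tau$ coming from the dihedral symmetry of the $p$-sphere (so $\tau(\theta+\tfrac\pi2)=\tau(\theta)+\tfrac\pi2$, $\tau(-\theta)=-\tau(\theta)$, $\tau(\tfrac\pi2-\theta)=\tfrac\pi2-\tau(\theta)$), the plan is to reduce this relation to the equality of two values of a single auxiliary function built from $\tau$, and to verify — and here the explicit form of $\tau$ for $\|\cdot\|_p$, not merely its symmetry, is needed — that this function is strictly monotone on the relevant range when $p\neq2$, so that the equality can hold only when the two arguments coincide, i.e.\ only when $\gamma=\pi/4$. With $\lambda\neq\lambda'$ in hand, the conclusion of (ii) follows from Theorem~\ref{t:nonEthm} exactly as in the first paragraph.

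The main obstacle is this last computation in part~(ii): pinning down $\lambda$ and $\lambda'$ from the geometry of the $\|\cdot\|_p$-unit sphere and showing their equality forces the cell to be a square. It cannot be handled by a purely qualitative argument, because $\lambda=\lambda'$ genuinely does occur — it holds for every square, by part~(i) — so it is not enough to invoke the absence of a rotational symmetry of $\|\cdot\|_p$ matching the rectangle; the precise shape of the unit sphere must be used to separate the square case from the properly rectangular one.
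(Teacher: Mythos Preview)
Your treatment of (i) matches the paper's: the $4$-fold symmetry of $\|\cdot\|_p$ swaps the two brace classes of a square and forces $\lambda=\lambda'$, so Lemma~\ref{l:braceparams} gives flexibility.

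For (ii) the paper's argument is much shorter than the plan you outline, and contrary to your claim it does \emph{not} invoke the explicit form of $\tau$. The observation you are missing is that the $4$-fold symmetry of $\|\cdot\|_p$ makes $u_x$ and $u_y$ orthogonal for \emph{every} inclination $\alpha$. With this in hand the paper argues in two lines. First, since $\tau$ is strictly increasing and (by the identity you yourself list) $\tau(\theta+\tfrac\pi2)=\tau(\theta)+\tfrac\pi2$, one has $\tau(\theta+\beta)=\tau(\theta)+\tfrac\pi2$ only when $\beta=\tfrac\pi2$; hence for a non-square rectangle the brace tangents $u_b,u_{b'}$ are never orthogonal. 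Second, from the brace-parameter equations~\eqref{e:braceparams} together with $u_x\perp u_y$, the paper concludes $\lambda\neq\lambda'$ and applies Theorem~\ref{t:nonEthm}. No auxiliary monotone function built from the explicit $p$-norm formula is used.

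One caveat in favour of your caution: the paper's last implication --- that $u_x\perp u_y$ and $u_b\not\perp u_{b'}$ force $\lambda\neq\lambda'$ --- is asserted in a single parenthetical. If you unpack~\eqref{e:braceparams} in an orthonormal $\{u_x,u_y\}$-frame, orthogonality of $u_b,u_{b'}$ corresponds to $\lambda\lambda'=1$, not to $\lambda=\lambda'$; and for an axis-aligned non-square rectangle ($\alpha=0$) the coordinate-axis reflection symmetry of $\|\cdot\|_p$ already swaps the two brace classes exactly as in (i), which gives $\lambda=\lambda'$ there as well. So while the paper's route is far simpler than your proposed computation, your instinct that this step deserves explicit scrutiny is not misplaced.
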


\begin{proof}
(i) The classical $p$-norms for $1< p < \infty$ are differentiable, strictly convex and 4-fold symmetric. By $4$-fold symmetry the vectors $u_x$ and $u_y$ are orthogonal for any inclination value $\alpha$. Also by $4$-fold symmetry, the braces tangents $u_b$  and $u_{b'}$ for a grid of squares are orthogonal for any value of $\alpha$. In view of this double orthogonality, Equation \ref{e:braceparams} has solutions $\lambda = \lambda'$, and (i) follows.

(ii) When the rectangles are not squares then their diagonals subtend an angle $0<\beta<\pi$ with $\beta \neq\pi/2$.  Also, the tangent function $\tau(\theta)$ for $\|\cdot\|_p$ has the property that 
$\tau(\theta+\beta)\neq \tau(\theta)+\pi/2$, for
$ \beta\neq \pi/2$, and so the braces tangents are not orthogonal  for any value of $\alpha$. By the brace parameter equations (\ref{e:braceparams}), $\lambda\neq \lambda'$ (since $u_x, u_y$ are orthogonal) and so Lemma \ref{l:braceparams} applies.
\end{proof}

\begin{rem}\label{r:22tight}
A graph $G$ is $(2, 2)$-tight if  $2|V|-|E|=2$ and for each subgraph $G'$ we have $2|V'|-|E'|\geq 2.$ 
The characterisations in Dewar \cite{dew}, and Kitson and Power \cite{kit-pow}, show that the existence of a $(2,2)$-tight spanning subgraph of $G$ is necessary and sufficient for
the infinitesimal rigidity of a bar-joint framework $(G,p)$ which is ``sufficiently generic". Under our assumptions for the underlying norm ``sufficiently generic"  corresponds to the non-Euclidean variant of the Euclidean rigidity matrix $R(G,p)$ having maximum rank, in which case the framework $(G,p)$ is said to be \emph{regular}. 
It follows from our analysis that the braced square grid $\G(\B)$ is regular if and only if the brace parameters are distinct and the cycles of the cycle-rooted forest are independently coloured.

In the case of the nondifferentiable norm  $\|(x,y)\|_\infty=$max$\{|x|,|y|\}$ the tangent function $\tau(\theta)$ is not defined for $\theta=\pi/4, 3\pi/4$. Let us say that a braced rectangle grid $\G(\B)$ is \emph{well-positioned for $\|\cdot\|_\infty$}
if $\alpha\neq  \pi/4, 3\pi/4$ and the two brace directions have well-defined tangent directions which are orthogonal. Then one can show, as before, that if $\G(\B)$ is well-positioned then it is infinitesimally rigid for $\|\cdot\|_\infty$ if and only if the braces graph is spanning and each component has an independent cycle. See also
the general characterisation of infinitesimal $\|\cdot\|_\infty$-rigidity, for well-positioned, regular frameworks, given in Kitson and Power \cite{kit-pow}. 
\end{rem}

Let us also note the curiosity of the special rigidity requirements for a braced square grid $\G(\B)$, with zero inclination, with respect to the non-differentiable norm $\|\cdot\|_\infty$. By the square geometry of the unit sphere, a diagonal brace, say $p_1p_2$ with $p_1=(0,0), p_2=(1,1)$, has the one-sided infinitesimal flexes $u=((0,0),(-1,0))$ and $u=((0,0),(0,-1))$. Also, the infinitesimal flexes no longer form a vector space.

\begin{prop}
An axis-aligned braced grid of squares is infinitesimally rigid with respect to $\|\cdot\|_\infty$ if and only if each vertex of the braces graph is incident to blue and red edges.
\end{prop}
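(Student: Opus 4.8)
The plan is to reduce the statement to a one-line combinatorial argument, by translating the $\|\cdot\|_\infty$-flex conditions imposed by the braces into constraints on the sheer degrees of freedom of the unbraced grid, just as in Section~\ref{ss:ssmatrix} but now using the one-sided flexes forced by the square unit sphere.

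First I would note that along every bar of the axis-aligned grid, each being horizontal or vertical, the $\|\cdot\|_\infty$-flex condition is two-sided and linear (it just says that the relative velocity of the two endpoints has vanishing component along the bar), so the infinitesimal flexes of the unbraced grid $\G$ still form the vector space $\M\cong\bR^L$ of the Euclidean case. Such a flex is described by the common $x$-velocity $h_k$ of the joints on the $k$-th horizontal line and the common $y$-velocity $g_j$ on the $j$-th vertical line; it is a translation precisely when all jump variables $\eta_k=h_k-h_{k-1}$ and $\gamma_j=g_j-g_{j-1}$ vanish; and these jump variables are free and are indexed by the vertices of the braces graph (a horizontal ribbon, that is a row of squares, giving some $\eta_k$, and a vertical ribbon a column, giving some $\gamma_j$). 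For the braced grid one needs in addition that each brace bar is flexed to first order, and here is the key computation: across its square the relative velocity of the endpoints of a slope-$+1$ (blue) brace is $(\eta_k,\gamma_j)$, and since its direction $(1,1)$ is a \emph{corner} of the $\|\cdot\|_\infty$-circle one gets $\|(1,1)+t(\eta_k,\gamma_j)\|_\infty-1=t\,\max(\eta_k,\gamma_j)$ for small $t>0$, so the brace condition is $\max(\eta_k,\gamma_j)=0$; symmetrically a slope-$-1$ (red) brace, with direction the opposite corner $(-1,1)$, gives $\min(\eta_k,\gamma_j)=0$. Writing $x_v$ for the jump variable at a vertex $v$, and using that the rigid-motion flexes of a braced grid are exactly the translations (cf.\ \cite{kit-pow}), I conclude: $\G(\B)$ is infinitesimally rigid if and only if the only assignment $(x_v)$ of reals to the vertices with $\max(x_u,x_w)=0$ for every blue edge $\{u,w\}$ and $\min(x_u,x_w)=0$ for every red edge is $x\equiv 0$.

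With this dictionary the equivalence is immediate. If every vertex $v$ is incident to both a blue and a red edge, then the blue edge forces $x_v\le 0$ and the red edge forces $x_v\ge 0$, so $x\equiv 0$ and $\G(\B)$ is rigid. Conversely, if some vertex $v_0$ is incident to no blue edge (the case of no red edge being symmetric, with $x_{v_0}=-1$), put $x_{v_0}=1$ and $x_v=0$ for $v\neq v_0$: every blue edge avoids $v_0$ and reads $\max(0,0)=0$, and every red edge reads $\min(0,0)=0$ or $\min(1,0)=0$, so this is a non-trivial solution, hence an infinitesimal flex of $\G(\B)$ that is not a translation, and $\G(\B)$ is not infinitesimally rigid. (Here the braces graph is taken on the full vertex set $V(K_{m,n})$, so that a row or column carrying no brace is an isolated vertex, for which the incidence condition indeed fails.) The only substantive step is the middle one, and the point needing care there is the geometric bookkeeping: that the two diagonal directions of an axis-aligned square land exactly on the two corners of the $\|\cdot\|_\infty$-circle and therefore impose the complementary constraints $\max(\cdot,\cdot)=0$ and $\min(\cdot,\cdot)=0$ on the two ribbon sheers of that square. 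Once that is in place the combinatorics is trivial.
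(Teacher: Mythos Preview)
Your proof is correct and follows the same idea as the paper's sketch: translate the brace constraints into conditions on the ribbon sheers and observe that a ribbon carrying only one brace colour admits a one-sided sheering flex, while both colours pin the sheer to zero. The paper states this in two sentences without writing down the constraints; your explicit computation that a blue brace imposes $\max(\eta_k,\gamma_j)=0$ and a red brace imposes $\min(\eta_k,\gamma_j)=0$ is exactly what makes that sketch rigorous, and your vertex-by-vertex argument ($x_v\le 0$ from blue, $x_v\ge 0$ from red) cleanly handles the sufficiency direction that the paper leaves to the reader.
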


The graph condition is equivalent to requiring that each ribbon contains at least one brace of each type. This is necessary and sufficient to rule out a sheering flex of $\G(\B)$ associated with the ribbon. Note, in particular that a ribbon bar-joint framework containing braces of only one type (colour) has a one-sided sheering flex. The proposition follows readily from this.

For some further discussions of non-Euclidean frameworks see also 
Dewar \cite{dew}, Kitson \cite{kit}, Kitson, Nixon and Schulze \cite{kit-nix-sch}, and Nixon and Power \cite{nix-pow}.

\section{Infinite braced grids}\label{s:infinite}

The definitions  of infinitesimal flex and infinitesimal rigidity for a countably infinite bar-joint framework are the same as those for a finite bar-joint framework \cite{kit-pow}. 
Let $\G_\infty$ be the infinite unbraced grid bar-joint framework in the usual Euclidean space $\bR^2$ with joints located at points with integer coordinates. Once again there is a distinct set of infinitesimal flexes, $\{u_l: l\in L\}$, which is indexed by the lines of $\G_\infty$. Also they form a generalised basis in the following sense.

\begin{lem}\label{l:flexspaceofGrid} Every infinitesimal flex $u:\bZ^2 \to \bR^2$ of the grid framework $\G_\infty$ has a unique representation
\[
u =\sum_{l\in L}   d_lu_l = \sum_{l\in L_x} a_lu_l +\sum_{l\in L_y} b_lu_l.
\]
\end{lem}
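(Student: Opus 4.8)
The plan is to establish Lemma \ref{l:flexspaceofGrid} by first treating the finite case (which is essentially done in the paper) and then passing to the infinite grid by a direct coordinate analysis, exploiting the rigidity of each individual square.

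\medskip

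\noindent\textbf{Existence and form of the representation.} Let $u:\bZ^2\to\bR^2$ be an infinitesimal flex of $\G_\infty$. First I would observe that for each unit square $[j,j+1]\times[k,k+1]$ of the grid, the bar $4$-cycle at that square is (for the Euclidean norm) an infinitesimally rigid framework only up to the $3$-dimensional space of rigid motions; but the bars are axis-parallel, and the flex condition $\langle u(p)-u(q),p-q\rangle=0$ along a horizontal bar forces the two endpoints to have equal vertical component, while along a vertical bar it forces equal horizontal component. Hence, for a horizontal line $l=\bZ\times\{k\}\in L_x$, all joints on $l$ share a common vertical velocity component, call it $b_l\in\bR$; and for a vertical line $l=\{j\}\times\bZ\in L_y$, all joints on $l$ share a common horizontal velocity component $a_l\in\bR$. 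Writing the velocity at the joint $(j,k)$ as $u(j,k)=(a_{l_j},\,b_{l_k})$, where $l_j\in L_y$ is the vertical line through the joint and $l_k\in L_x$ is the horizontal line through it, we recover precisely $u=\sum_{l\in L_x} b_l u_l+\sum_{l\in L_y} a_l u_l$, since $u_l$ for $l\in L_x$ (resp.\ $l\in L_y$) is the unit vertical (resp.\ horizontal) field supported on $l$. Conversely any such sum manifestly satisfies the flex condition on every bar, so these are exactly the infinitesimal flexes; note that no convergence issue arises because for a fixed joint only the two lines through it contribute, so the sum is locally finite.

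\medskip

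\noindent\textbf{Uniqueness of the coefficients.} For uniqueness I would argue that if $\sum_{l\in L_x} b_l u_l+\sum_{l\in L_y} a_l u_l=0$ as a vector field, then evaluating at a joint $(j,k)$ gives $(a_{l_j},b_{l_k})=(0,0)$; since every line of $L$ passes through at least one joint, every coefficient $a_l$ and $b_l$ vanishes. Hence the representation is unique. This also shows the $u_l$, $l\in L$, form a generalised (algebraic, locally finite) basis for the flex space, matching the finite-grid statement of Section \ref{s:BCproof}.

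\medskip

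\noindent\textbf{Main obstacle.} The only subtlety compared with the finite case is that $L$ is now infinite and the naive sum $\sum_{l\in L} d_l u_l$ is not a convergent series in any normed sense; the point to get right is that it is nevertheless a well-defined vector field because it is \emph{locally finite} --- at each joint only two summands are nonzero --- so ``$=$'' in the displayed equation should be read as equality of vector fields $\bZ^2\to\bR^2$, not as a limit. Once this is noted the argument is the same propagation of equal-component constraints along rows and columns as in the finite grid, so there is no real difficulty; I would simply state the locally finite convention explicitly at the start of the proof to avoid any ambiguity.
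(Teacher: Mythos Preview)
The paper does not actually supply a proof of this lemma; it is stated as the infinite analogue of the finite-grid fact (``It is elementary to show that the set of these vector fields is a basis\dots'') and left to the reader. So there is no approach to compare against, and your coordinate-by-coordinate argument together with the remark on local finiteness is exactly the kind of elementary verification the paper has in mind.

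That said, there is a real slip in your computation. For a horizontal bar $p_1p_2$ with $p_1-p_2=(\pm1,0)$, the flex condition $\langle u(p_1)-u(p_2),p_1-p_2\rangle=0$ forces equality of the \emph{horizontal} (first) components, not the vertical ones; dually, vertical bars force equal vertical components. Consequently, along a horizontal line $l\in L_x$ it is the horizontal component that is constant, and the paper's $u_l$ is the unit field \emph{parallel} to $l$ (so horizontal when $l\in L_x$, vertical when $l\in L_y$), as stated in Section~\ref{s:BCproof}. With your swapped convention the field you call $u_l$ for $l\in L_x$ (vertical, supported on a horizontal line) is \emph{not} an infinitesimal flex: a vertical bar leaving $l$ gives $\langle (0,1)-(0,0),(0,\pm1)\rangle\neq 0$. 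Once you correct this swap---horizontal lines carry horizontal basis fields and record a common horizontal coefficient, vertical lines likewise---the rest of your argument (existence by reading off the two components at each joint, uniqueness by evaluation, local finiteness of the sum) goes through unchanged.
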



The proof of the Bolker-Crapo Theorem carries over to give the following.

\begin{thm}\label{t:infiniteNonE} An infinite braced grid $\G_\infty(\B)$ in $(\bR^2, \|\cdot\|_2)$ is infinitesimally rigid if and only if the braces graph is a connected spanning subgraph of $K_{\infty,\infty}$.
\end{thm}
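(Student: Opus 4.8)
The plan is to transfer the finite stress-sheer machinery of Section \ref{s:BCproof} to the countable setting, taking care that the algebraic equivalences of Lemmas \ref{l:sigmaLemma} and \ref{l:raisondetre} survive in the presence of infinitely many lines and ribbons. The key combinatorial input, Lemma \ref{l:flexspaceofGrid}, already supplies the crucial fact that $\M_\infty$, the infinitesimal flex space of $\G_\infty$, is canonically $\bR^{L}$ (unrestricted real functions on $L$, no summability condition): an infinitesimal flex $u$ restricted to each single line $l$ must be an infinitesimal flex of that line framework, hence a scalar multiple $d_l$ of $u_l$, and the linear geometry of the grid then forces these to patch together with no compatibility constraint beyond what is recorded by the $d_l$.

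First I would set up $\S_\infty = \bR^{R}$, the ribbon sheers, and the surjection $\sigma:\M_\infty \to \S_\infty$ defined ribbon-by-ribbon exactly as before, with $\ker\sigma = \M_{\rm trans}$ the $2$-dimensional translation space (a line's coefficient contributing to a ribbon sheer difference vanishes for all ribbons iff it is constant along $L_x$ and constant along $L_y$). Then $\sigma(u_{\rm rot}) = \ul 1$ still holds with the obvious bi-infinite analogue of $u_{\rm rot}$, so Lemma \ref{l:sigmaLemma} goes through verbatim: $\M_{\rm rig} = \{u : \sigma(u) \in \bR\ul1\}$, a $3$-dimensional space. The stress-sheer matrix $SS(\B)$ becomes a possibly bi-infinite matrix, one row per brace, each row having exactly one $+1$ and one $-1$; equivalently each brace $b$ gives a functional $f_b$ on $\S_\infty$ vanishing iff the two ribbons for $b$ carry equal sheer. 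As in Lemma \ref{l:raisondetre}(i), $u$ is an infinitesimal flex of $\G_\infty(\B)$ iff $\sigma(u)$ lies in the common kernel of all $f_b$, i.e.\ iff $\sigma(u)$ is constant on each connected component of the brace-ribbon graph $H$ (where ribbons are the vertices and braces are the edges, noting $H$ is the line graph picture of the braces subgraph of $K_{\infty,\infty}$ — a component of the braces graph corresponds to a component of $H$).

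The equivalence then reads: $\G_\infty(\B)$ is infinitesimally rigid iff the only $\sigma(u)$ consistent with all brace constraints are the scalar multiples of $\ul1$. If $H$ is connected (all braces linked through shared ribbons) and spanning (every ribbon meets a brace), any such $\sigma(u)$ is forced constant on the single component and hence equals $\lambda\ul1$, so $u\in\M_{\rm rig}$ and rigidity holds. Conversely, if $H$ fails to span, some ribbon $\rho_0$ carries no brace, and one builds a nontrivial flex supported on $\rho_0$'s two boundary lines with $\sigma$-value $1$ on $\rho_0$ and $0$ elsewhere — this is not rigid. If $H$ is disconnected, pick two ribbons $\rho_1,\rho_2$ in different components and the sheer field that is $1$ on $\rho_1$'s component and $0$ elsewhere; its preimage under $\sigma$ is a flex that is not a multiple of $\ul1$, hence not rigid. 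One should double-check that "braces graph connected and spanning as a subgraph of $K_{\infty,\infty}$" is exactly equivalent to "$H$ connected and spanning": spanning of the braces graph (every vertex of $K_{\infty,\infty}$ incident to a brace) translates to every ribbon meeting a brace, and connectivity transfers because the line graph of a graph with no isolated vertices is connected iff the graph is. The main obstacle is purely bookkeeping — confirming that no convergence or summability subtleties intrude, which they do not precisely because Lemma \ref{l:flexspaceofGrid} identifies the flex space with \emph{all} functions $L\to\bR$, so that the sheer constraints are imposed and solved component-by-component with no global coupling; once that is in hand the argument is line-for-line the Bolker--Crapo proof.
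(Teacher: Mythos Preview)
Your proposal is correct and follows essentially the same stress-sheer route as the paper: both transplant the finite machinery via Lemma \ref{l:flexspaceofGrid}, identify rigidity with $\bigcap_{b\in\B}\ker f_b=\bR\ul{1}$, and handle the not-spanning case by a ribbon sheering flex. The one cosmetic difference is in the disconnected case of the necessity direction: the paper invokes row/column permutation to place the braces in blocked (first- and third-quadrant) form and then exhibits a geometric rotation-type flex of one block, whereas you stay entirely in the sheer space and write down directly a sheer field equal to $1$ on one component and $0$ elsewhere --- your version is a touch more economical, the paper's more pictorial, but both rest on the surjectivity of $\sigma$ and the identification $\M_{\rm rig}=\sigma^{-1}(\bR\ul{1})$.
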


\begin{proof}[Proof sketch]
Once again there are brace functionals $f_b: R\to \bR$ and $\G_\infty(\B)$ is infinitesimally rigid if and only if the intersection of the nullspaces $\ker f_b$, for $ b \in \B$, is equal to $\bR\ul{1}$. Here $\ul{1}$ is the sheer field function, on the infinite set of ribbons, $R$, which is identically equal to 1. It follows that infinitesimal rigidity is preserved on permuting the (infinite) rows or columns of the braced grid. If the braces graph is not connected, with at least 2 infinite components, then, by permuting, we may assume that the braces are on bar 4-cycles in either the first or third quadrant of $\bZ^2$. Thus  $\G_\infty(\B)$ fails to be infinitesimally rigid since there is a nonzero flex fixing the bar 4-cycles in the first quadrant. A similar arguments applies whatever the cardinality of the components. The braces graph must be spanning, or else there is a ribbon that is free of braces and so there is a sheering infinitesimal flex. Thus the graph condition is necessary.

By Lemma \ref{l:ABCDlemma} the graph condition implies that
the infinitesimal flex space of $\G_\infty(\B)$ is equal to that of
$\G_\infty(\B_{\rm max})$ and so the sufficiency direction follows.
\end{proof}



In a similar way the equivalence in Theorem \ref{t:nonEthm} extends to  infinite braced grids $\G_\infty(\B)$.

\begin{eg}
The subgraph $H$ of $K_{\infty, \infty}$ indicated in Figure \ref{f:nonSeqRig}
is a braces graph  for a Euclidean infinite braced grid $\G_\infty(\B)$ where the first (resp. second) row of vertices corresponds to a consecutive ordering of the horizontal (resp. vertical) ribbons. By Theorem \ref{t:infiniteNonE} 
this braced grid is infinitesimally rigid. On the other hand it is straightforward to show that it contains no finite subframework, with more than 2 brace bars, which is infinitesimally rigid.
\begin{center}
\begin{figure}[ht]
\centering
\includegraphics[width=8cm]{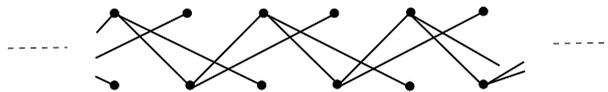}
\caption{A braces graph subgraph of $K_{\infty, \infty}$.}
\label{f:nonSeqRig}
\end{figure}
\end{center}
We note that this phenomenon is not possible for an infinite \emph{generic} bar-joint framework $(G,p)$ in $\bR^2$. Such a framework is infinitesimally rigid if and only if $(G,p)$ is \emph{sequentially infinitesimally rigid} (Kitson and Power \cite{kit-pow-infinite}). This means that there exists an increasing chain of infinitesimally rigid finite subframeworks $(G_n,p)$ with $G$ equal to the union of the $G_n$.
\end{eg}

\end{document}